\newtheorem{theorem}{Theorem}  
\newtheorem{lemma}[theorem]{Lemma}
\newtheorem{proposition}[theorem]{Proposition}
\newtheorem{conjecture}[theorem]{Conjecture}
\newtheorem{problem}[theorem]{Problem}
\newtheorem{question}[theorem]{Question}
\newenvironment{myindentpar}[1]%
 {\begin{list}{}%
         {\setlength{\leftmargin}{#1}}%
         \item[]%
 }
 {\end{list}}
\def\COMMENT#1{}
\def\TASK#1{}
\numberwithin{theorem}{section}
\numberwithin{equation}{section}
\newdimen\margin   
\def\textno#1&#2\par{%
   \margin=\hsize
   \advance\margin by -4\parindent
          \setbox1=\hbox{\sl#1}%
   \ifdim\wd1 < \margin
      $$\box1\eqno#2$$%
   \else
      \bigbreak
      \hbox to \hsize{\indent$\vcenter{\advance\hsize by -3\parindent
      \it\noindent#1}\hfil#2$}%
      \bigbreak
   \fi}
\def\noproof{{\unskip\nobreak\hfill\penalty50\hskip2em\hbox{}\nobreak\hfill%
       $\square$\parfillskip=0pt\finalhyphendemerits=0\par}\goodbreak}
\def\endproof{\noproof\bigskip}
\title[Proof of a tournament partition conjecture and an application to 1-factors]{Proof of a tournament partition conjecture and an application to 1-factors with prescribed cycle lengths}
\author{Daniela K\"uhn, Deryk Osthus and Timothy Townsend}
\thanks{The research leading to these results was partially supported by the European Research Council
under the European Union's Seventh Framework Programme (FP/2007--2013) / ERC Grant
Agreements no. 258345 (D.~K\"uhn) and 306349 (D.~Osthus).}
\begin{document}

\begin{abstract}
In 1982 Thomassen asked whether there exists an integer $f(k,t)$ such that every strongly $f(k,t)$-connected tournament $T$ admits a partition of its vertex set into $t$ vertex classes $V_1,\dots, V_t$ such that for all $i$ the subtournament $T[V_i]$ induced on $T$ by $V_i$ is strongly $k$-connected. Our main result implies an affirmative answer to this question. In particular we show that $f(k,t)=O(k^7t^4)$ suffices. As another application of our main result we give an affirmative answer to a question of Song as to whether, for any integer $t$, there exists an integer $h(t)$ such that every strongly $h(t)$-connected tournament has a $1$-factor consisting of $t$ vertex-disjoint cycles of prescribed lengths. We show that $h(t)=O(t^5)$ suffices.
\end{abstract}

\date{\today}

\maketitle 

\section{Introduction}
\subsection{Partitioning tournaments into highly connected subtournaments}
There is a rich literature of results and questions relating to partitions of (di)graphs into subgraphs which inherit some properties of the original (di)graph. For instance Hajnal~\cite{Haj} and Thomassen~\cite{Thom} proved that for every $k$ there exists an integer $f(k)$ such that every $f(k)$-connected graph has a vertex partition into sets $S$ and $T$ so that both $S$ and $T$ induce $k$-connected graphs. Here we investigate a corresponding question for tournaments.

A tournament is an orientation of a complete graph. A tournament is \textit{strongly connected} if for every pair of vertices $u, v$ there exists a directed path from $u$ to $v$ and a directed path from $v$ to $u$. For any integer $k$ we call a tournament $T$ \textit{strongly k-connected} if $|V(T)|>k$ and the removal of any set of fewer than $k$ vertices results in a strongly connected tournament. We denote the subtournament induced on a tournament $T$ by a set $U\subseteq V(T)$ by $T[U]$.

The following problem was posed by Thomassen (see~\cite{KBR}).
\begin{problem}\label{Thomassen Problem}
Let $k_1,\dots, k_t$ be positive integers. Does there exist an integer $f(k_1,\dots, k_t)$ such that every strongly $f(k_1,\dots, k_t)$-connected tournament $T$ admits a partition of its vertex set into vertex classes $V_1,\dots, V_t$ such that for all $i\in \{1,\dots, t\}$ the subtournament $T[V_i]$ is strongly $k_i$-connected?
\end{problem}

If $k_i=1$ for all $i\in \{2,\dots, t\}$ then $f(k_1,\dots, k_t)$ exists and is at most $k_1+3t-3$. This follows by an easy induction on $t$, taking $V_t$ to be a set inducing a directed $3$-cycle. Chen, Gould and Li~\cite{CGL} showed that every strongly $t$-connected tournament with at least $8t$ vertices admits a partition into $t$ strongly connected subtournaments. This gives the best possible connectivity bound in the case $k_1=\dots =k_t=1$ and $|V(T)|\geq 8t$. Until now even the existence of $f(2,2)$ was open. Our main result answers all cases of the above problem of Thomassen in the affirmative.

\begin{theorem}\label{first corollary}
Let $T$ be a tournament on $n$ vertices and let $k,t\in \mathbb{N}$ with $t\geq 2$. If $T$ is strongly $10^7k^6t^3\log (kt^2)$-connected then there exists a partition of $V(T)$ into $t$ vertex classes $V_1,\dots, V_t$ such that for all $i\in \{1,\dots, t\}$ the subtournament $T[V_i]$ is strongly $k$-connected.
\end{theorem}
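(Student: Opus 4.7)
The plan is to proceed by induction on $t$, peeling off one strongly $k$-connected class at a time. Set $K(k,t):=10^7 k^6 t^3\log(k t^2)$. The case $t=1$ is immediate from the hypothesis, and for $t\ge 2$ the inductive step reduces to the following \emph{extraction lemma}: if $T$ is strongly $K(k,t)$-connected then there is a set $U\subseteq V(T)$ with $|U|\le c k^6 t^2\log(k t^2)$ (for a suitable absolute constant $c$) such that $T[U]$ is strongly $k$-connected. Granted this, take $V_t:=U$ and apply the inductive hypothesis to $T':=T-V_t$: removing $m$ vertices from a strongly $K$-connected tournament leaves a strongly $(K-m)$-connected one, so $T'$ is strongly $K'$-connected with $K'\ge K(k,t)-c k^6 t^2\log(k t^2)\ge K(k,t-1)$, where the last inequality is a direct telescoping computation using that the per-step budget $c k^6 t^2\log(k t^2)$ matches the discrete derivative of $K(k,t)$ in $t$.

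For the extraction lemma I would choose $U$ randomly, including each vertex of $T$ independently with probability $p$ calibrated so that $\ex|U|$ is of order $k^6 t^2\log(k t^2)$. A Chernoff bound concentrates $|U|$ in the required range, and it remains to bound the probability that $T[U]$ fails to be strongly $k$-connected. A failure is witnessed by pairwise disjoint sets $A,B,C\subseteq U$ with $A,B$ nonempty and $|C|<k$ such that no arc of $T$ goes from $A$ to $B$ outside of $C$; equivalently, every $A$-to-$B$ directed path inside $U$ passes through $C$. For representative $u\in A$ and $v\in B$, Menger in $T$ provides $K(k,t)$ internally disjoint $u$-to-$v$ paths, at most $|C|<k$ of which can lie entirely inside $U\setminus C$, so at least $K(k,t)-k$ of them must contain an internal vertex outside $U$. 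Up to a preliminary lemma guaranteeing the existence of many internally disjoint $u$-to-$v$ paths of bounded length in strongly highly connected tournaments (so that the event ``all internal vertices land outside $U$'' is not forced by a single vertex falling outside), this makes the failure probability small, and a union bound over the choices of $A$, $B$, and $C$ — stratified by their sizes — drives the total below one.

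The main obstacle is carrying out this union bound within the stated budget. The number of separators $C$ of size less than $k$ inside $U$ contributes a factor of order $|U|^{k-1}$, which must be offset by the probability that a selected family of paths survives the random sampling; the exponents $6$ in $k$ and the $\log(kt^2)$ factor in the theorem arise precisely from balancing these contributions. A further subtlety is ensuring that the short-path input really produces paths short enough for survival to be feasible — a naive bound on path length that is linear in $n$ would be useless — so the plan relies on a structural result in strongly highly connected tournaments supplying many internally disjoint paths each of length polynomial in $k$ and $t$ between any two vertices. Once that structural input is available, the remainder is a careful but essentially routine optimisation of the sampling probability against the stratified union bound, and the main theorem falls out of the induction above.
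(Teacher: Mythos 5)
Your high-level strategy — peel off one small strongly $k$-connected class at a time by an extraction lemma, then recurse on the remainder — is genuinely different from the paper, which builds all $t$ classes simultaneously via Theorem~\ref{main theorem} and in fact proves a much stronger property (each $V_j$ remains strongly $k$-connected after adding \emph{any} set of leftover vertices, which is what lets the final partition be completed for free). The inductive bookkeeping and the telescoping of $K(k,t)-K(k,t-1)$ in your plan are fine. The problem is the extraction lemma itself: the random-sampling argument you sketch cannot work, and no choice of short-path input will rescue it.

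Concretely, you take $p=\Theta\bigl(k^6t^2\log(kt^2)/n\bigr)$ so that $\ex|U|$ is a constant independent of $n$. Fix a potential separator pair $(u,v)$ with $u,v\in U$ and $C\subseteq U$, $|C|<k$. Menger gives $K-k$ internally disjoint $u$--$v$ paths avoiding $C$, and you need at least one of them to lie entirely inside $U$. But a path with $\ell\ge 1$ internal vertices survives with probability $p^{\ell}=O(n^{-\ell})$, and since a separator forces $v\to u$ rather than $u\to v$ there is no path with $\ell=0$. Even if every path has a single internal vertex, the expected number of surviving paths is at most $Kp=O(\mathrm{poly}(k,t)/n)\to 0$. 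So, conditioned on $u,v,C\subseteq U$, the probability that \emph{no} path survives tends to $1$, not to $0$; the union bound then gives an estimate that is far above $1$ rather than below it. Shortening the paths to length polynomial in $k,t$, as you propose, does not help, because any length $\ge 2$ already kills the calculation.

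This is not an artefact of a weak union bound: the underlying statement ``a random $O(1)$-size subset of a highly connected tournament is likely strongly $k$-connected'' is false. Take the locally cyclic tournament on $\{1,\dots,n\}$ where $j\to i$ whenever $0<j-i\le K$ and $i\to j$ whenever $j-i>K$; this is strongly $\Theta(K)$-connected, but a uniformly random $m$-subset with $m$ fixed and $n\to\infty$ has all pairwise differences exceeding $K$ with high probability, so the induced subtournament is transitive and not even strongly connected. Thus the small strongly $k$-connected set $U$ (which does exist) must be found structurally — this is exactly where the paper invests its effort, using out-/in-dominating transitive sets (Lemmas~\ref{out-dominating sets},~\ref{in-dominating sets}), the linkedness theorem (Theorem~\ref{connectivity implies linkedness}) and Proposition~\ref{linkedness implies short paths} to route short connecting paths, and a ``safe vertex'' colouring scheme to absorb the exceptional vertices not covered by the dominating sets. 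Your plan would need to replace the random sampling with some analogue of that machinery, at which point it converges with the paper's proof of Theorem~\ref{main theorem}.
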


The above bound is unlikely to be best possible. It would be interesting to establish the correct order of magnitude of $f(k_1,\dots, k_t)$ for all fixed $k_i$ and $t$. In fact, we believe a linear bound may suffice.

\begin{conjecture}
There exists a constant $c$ such that the following holds. Let $T$ be a tournament on $n$ vertices and let $k,t\in \mathbb{N}$. If $T$ is strongly $ckt$-connected then there exists a partition of $V(T)$ into $t$ vertex classes $V_1,\dots, V_t$ such that for all $i\in \{1,\dots, t\}$ the subtournament $T[V_i]$ is strongly $k$-connected.
\end{conjecture}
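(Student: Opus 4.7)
The plan is to attack the conjecture by induction on $t$, the base case $t=1$ being trivial (take $V_1=V(T)$). For the inductive step, given a strongly $ckt$-connected tournament $T$, the aim is to find a class $V_t\subseteq V(T)$ such that $T[V_t]$ is strongly $k$-connected while $|V_t|\le ck$. Granting this, the elementary observation that removing $s$ vertices from a strongly $K$-connected tournament leaves a strongly $(K-s)$-connected tournament on the remaining vertices (a separator of size $<K-s$ in $T-V_t$ unioned with $V_t$ is a separator of size $<K$ in $T$) shows that $T-V_t$ remains strongly $ck(t-1)$-connected, at which point the inductive hypothesis supplies the remaining classes $V_1,\dots,V_{t-1}$.

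All of the difficulty is thus pushed into the following slice-off lemma: there exists an absolute constant $c$ such that every strongly $ck$-connected tournament $T$ contains a vertex set $U$ with $|U|\le ck$ and $T[U]$ strongly $k$-connected. The minimum possible order of a strongly $k$-connected tournament is $2k+1$, so a linear dependence of $|U|$ on $k$ is the best one could hope for, and it would be enough to match the conjecture. For $k=1$ the statement is trivial, since every strongly connected tournament contains a directed $3$-cycle; for general $k$ it expresses the principle that $k$-connectivity ought to be witnessable on a linear-sized subset inside a sufficiently connected host.

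The slice-off lemma, if true, is likely the hard part of the whole conjecture, and I expect it to be the main obstacle. The techniques underlying Theorem~\ref{first corollary} appear to produce strongly $k$-connected subtournaments whose order is polynomial rather than linear in $k$, and this is presumably why that theorem yields only a polynomial rather than linear dependence on $k$ and $t$. A second natural route, assigning each vertex independently and uniformly at random to one of $t$ classes, runs into a parallel bottleneck: the in- and out-degrees inside each class concentrate around $ck$ by Chernoff when $T$ is strongly $ckt$-connected, but a union bound over potential separators in each class is expensive enough to introduce a logarithmic factor (visible in Theorem~\ref{first corollary}), and eliminating it appears to require either sharper concentration for the relevant connectivity events or a genuinely new structural argument that bypasses the union bound altogether.
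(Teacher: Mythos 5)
This statement is labelled as a \emph{conjecture} in the paper, and the paper does not prove it, so there is no authoritative proof to compare your attempt against. You are appropriately candid that what you offer is a reduction rather than a proof. The reduction itself is sound: the observation that deleting $s$ vertices from a strongly $K$-connected tournament leaves a strongly $(K-s)$-connected tournament is correct, so repeatedly slicing off sets $U$ with $|U|\le ck$ and $T[U]$ strongly $k$-connected, starting from a strongly $ckt$-connected tournament, would indeed produce the desired $t$-partition (with the final class being whatever remains, which is still strongly $ck$-connected and hence strongly $k$-connected for $c\ge 1$).

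The genuine gap is the slice-off lemma itself, and it is worth emphasising that this lemma is strictly \emph{stronger} than the conjecture, not a reformulation of it. The conjecture produces a partition into $t$ classes each strongly $k$-connected, but those classes have typical size about $n/t$, which need bear no relation to $k$; nothing in the conjecture asserts the existence of a strongly $k$-connected subtournament on only $O(k)$ vertices inside a sufficiently connected host. So you have exchanged one open problem for one that is at least as hard, plausibly harder, and not obviously true. The paper's own Theorem~\ref{main theorem} does not approach it either: one can force $|V_j|\le n/m$ there, but the connectivity hypothesis grows linearly in $m$, so pushing $n/m$ down to $O(k)$ would require connectivity depending on $n$, which is unusable. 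Absent the slice-off lemma the proposal reduces the conjecture without settling it --- which, to your credit, you say explicitly. (One small correction: the logarithmic factor in Theorem~\ref{first corollary} enters through the linkedness-to-connectivity bound of Theorem~\ref{connectivity implies linkedness}, not through a union bound over separators in a random colouring, though this does not affect your overall assessment.)
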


It would also be interesting to know whether Theorem~\ref{first corollary} can be generalised to digraphs.

\begin{question}
Does there exist, for all $k,t\in \mathbb{N}$, a function $\hat{f}(k,t)$ such that for every strongly $\hat{f}(k,t)$-connected digraph $D$ there exists a partition of $V(D)$ into $t$ vertex classes $V_1,\dots, V_t$ such that for all $i\in \{1,\dots, t\}$ the subdigraph $D[V_i]$ is strongly $k$-connected?
\end{question}

Instead of proving Theorem~\ref{first corollary} directly, we first prove the following somewhat stronger result. It establishes the existence of small but powerful `linkage structures' in tournaments, and Theorem~\ref{first corollary} follows from it as an immediate corollary. These linkage structures are partly based on ideas of K\"uhn, Lapinskas, Osthus and Patel~\cite{KLOP}, who proved a conjecture of Thomassen by showing that for every $k$ there exists an integer $\tilde{f}(k)$ such that every strongly $\tilde{f}(k)$-connected tournament contains $k$ edge-disjoint Hamilton cycles.

\begin{theorem}\label{main theorem}
Let $T$ be a tournament on $n$ vertices, let $k,m,t\in \mathbb{N}$ with $m\geq t\geq 2$. If $T$ is strongly $10^7k^6t^2m\log (ktm)$-connected then $V(T)$ contains $t$ disjoint vertex sets $V_1,\dots, V_t$ such that for every $j\in \{1,\dots, t\}$ the following hold:
\begin{enumerate}[{\rm (i)}]
\item $|V_j|\leq n/m$,
\item for any set $R\subseteq V(T)\backslash \bigcup_{i=1}^t V_i$ such that $|V_j\cup R|>k$ the subtournament $T[V_j\cup R]$ is strongly $k$-connected.
\end{enumerate}
\end{theorem}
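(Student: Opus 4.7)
The plan is to reduce property (ii) to two structural properties of each $V_j$ and then to build the $V_j$'s iteratively using a dominating-set-plus-linkage-skeleton construction in the spirit of K\"uhn, Lapinskas, Osthus and Patel~\cite{KLOP}. Call a set $V \subseteq V(T')$ a \emph{$k$-absorber} of a tournament $T'$ if (A) $T'[V]$ is strongly $k$-connected and (B) every vertex of $V(T') \setminus V$ has at least $k$ in-neighbors and at least $k$ out-neighbors in $V$. I would first show that to prove Theorem~\ref{main theorem} it suffices to construct, for each $j$, a $k$-absorber $V_j$ of $T_j := T - \bigcup_{i < j} V_i$ with $|V_j| \leq n/m$. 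Indeed, given $R \subseteq V(T) \setminus \bigcup_i V_i$ and $S \subseteq V_j \cup R$ with $|S| < k$, property (A) makes $T[V_j \setminus S]$ strongly connected, and since $R \subseteq V(T_j) \setminus V_j$, property (B) gives each $v \in R \setminus S$ at least one in-neighbor and at least one out-neighbor in $V_j \setminus S$; routing through $V_j \setminus S$ then yields strong connectivity of $T[V_j \cup R] - S$.

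For the iteration, since each $|V_i| \leq n/m$, passing from $T$ to $T_j$ drops the connectivity by at most $(j-1)n/m$; the factor $t^2$ in the hypothesis provides enough slack that $T_j$ remains strongly $\Omega(k^6 t m \log(ktm))$-connected throughout. The crux therefore reduces to a \emph{core lemma}: in any sufficiently highly connected tournament $T'$ on $n' \geq m$ vertices there is a $k$-absorber of size at most $n'/m$. Applying this lemma inside $T_1, \ldots, T_t$ in turn produces the required $V_1, \ldots, V_t$.

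I would construct the absorber as $V = D \cup L$, with $D$ providing condition (B) and $L$ adding the structure needed for (A). The set $D$ is built by a greedy or median-order dominating argument: iteratively add vertices that cover many still-undercovered in- and out-neighborhoods. The high in- and out-degrees guaranteed by the connectivity hypothesis allow this process to terminate after $O(\log(ktm))$ rounds with $|D|$ of order $k \log(ktm)$ and every outside vertex having at least $k$ in- and at least $k$ out-neighbors in $D$; the $\log(ktm)$ factor in the connectivity bound is exactly what the greedy analysis needs. The skeleton $L$ is then obtained by, for each relevant pair of vertices of $D$, routing $k$ internally disjoint short paths in $T'$ and adding their interiors to $V$; this is the KLOP-style linkage gadget, and the factor $k^6$ in the hypothesis is calibrated to ensure that all these paths can be chosen internally disjoint inside the budget $|V| \leq n'/m$. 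Property (B) survives the addition of $L$ since $D \subseteq V$, while (A) follows from the linkage.

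The main obstacle is the core lemma, and specifically the simultaneous achievement of (A) and (B) with $|V| \leq n'/m$. A naive random construction would introduce a $\log n'$ factor in (B) that is independent of $m,k,t$ and so cannot be tolerated; the role of the structural greedy argument is to replace this by $\log(ktm)$. The extra $k^6$ factor is then needed to support the linkage skeleton and to keep its size under the $n'/m$ cap, and the careful bookkeeping---showing that $k$ internally disjoint linkages can be added between all the relevant pairs of vertices of $D$ without disturbing (B) or blowing the budget---is the principal technical hurdle.
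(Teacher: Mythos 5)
Your reduction to a per-class ``$k$-absorber'' is a sensible reformulation and your iterative scheme (build $V_1$, pass to $T-V_1$, and so on) is different from the paper's simultaneous construction, but the proposal has a genuine gap at its core: you assert that the greedy/median-order dominating argument produces, with only $O(k\log(ktm))$ vertices, a set $D$ in which \emph{every} vertex outside has at least $k$ in- and $k$ out-neighbours. That is not what such arguments give. What you actually get (this is the content of Lemma~\ref{out-dominating sets} and Lemma~\ref{in-dominating sets}) is a dominating set of size $O(\log(ktm))$ together with an \emph{exceptional set} $E$ of non-dominated vertices of size up to roughly $\hat\delta^\pm(T)/(km)$, which can be of order $n/(km)$. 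To reach $|E|=0$ the greedy argument would need $\Theta(\log n)$ rounds, exactly the blow-up you say you will avoid. So your property~(B) does not follow from the dominating-set step, and your ``core lemma'' (existence of a $k$-absorber of size $\le n'/m$) is left unproven precisely at the point where the real difficulty lies.

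Handling this exceptional set is in fact the paper's central technical issue. Once $E$ exists, one must either add it to the absorber and prove strong $k$-connectivity through these badly behaved vertices, or somehow guarantee that each $v\in E$ still has $k$ in/out-neighbours among the chosen vertices. Neither is automatic: routing $k$ disjoint short paths to and from each of the $\Theta(n/(km))$ vertices of $E$ would blow the $n/m$ budget, and $v\in E$ may fail to have any in-neighbours (or out-neighbours) in the dominating sets at all. The paper resolves this with the ``safe vertex'' machinery (Claims 3--5), which crucially exploits (a) the structure of $E$ given by Lemmas~\ref{out-dominating sets} and~\ref{in-dominating sets} (each $E_{A_i}$ is the common in-neighbourhood of $A_i$, so $v\in E_{A_i}$ is guaranteed out-edges into $A_i$ even though it has no in-edges from $A_i$), and (b) the freedom to assign each exceptional vertex to whichever of the $t$ colour classes gives it enough safe neighbours --- a case analysis that uses all $t$ classes simultaneously. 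Your sequential construction does not have this second degree of freedom, so even if a $k$-absorber of the right size does exist (which I am not convinced of without further argument), the construction you sketch does not produce one. In short, the linkage bookkeeping you identify as ``the principal technical hurdle'' is real but secondary; the missing ingredient is the exceptional-set analysis, which your write-up does not mention.
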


\subsection{Partitioning tournaments into vertex-disjoint cycles}
Theorem~\ref{main theorem} also has an application to another problem on tournaments, this time concerning partitioning the vertices of a tournament into vertex-disjoint cycles of prescribed lengths.

Reid~\cite{KBR2} proved that any strongly $2$-connected tournament on $n\geq 6$ vertices admits a partition of its vertices into two vertex-disjoint cycles (unless the tournament is isomorphic to the tournament on $7$ vertices which contains no transitive tournament on $4$ vertices). Chen, Gould and Li~\cite{CGL} showed that every strongly $t$-connected tournament with at least $8t$ vertices admits a partition into $t$ vertex-disjoint cycles. This answered a question of Bollob\'as (see~\cite{KBR2}), namely what is the least integer $g(t)$ such that all but a finite number of strongly $g(t)$-connected tournaments admit a partition into $t$ vertex-disjoint cycles? Song proved the following strengthening of Reid's result.

\begin{theorem}\label{Song theorem}\cite{Song}
Let $T$ be a tournament on $n\geq 6$ vertices and let $3\leq L\leq n-3$. If $T$ is strongly $2$-connected then $T$ contains two vertex-disjoint cycles of lengths $L$ and $n-L$ (unless $T$ is isomorphic to the tournament on $7$ vertices which contains no transitive tournament on $4$ vertices).
\end{theorem}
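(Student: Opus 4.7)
The plan is to derive the result from Reid's partition theorem combined with Moon's pancyclicity theorem, via an iterative rebalancing argument. Reid's theorem (cited just above the statement) supplies a partition $V(T)=V(C_1)\cup V(C_2)$ into two vertex-disjoint cycles of some lengths $(\ell_1,\ell_2)$ with $\ell_1+\ell_2=n$, provided $T$ is not the exceptional $7$-vertex tournament. We then aim to modify this decomposition so that the two cycle lengths become exactly $(L,n-L)$.

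The central tool is a one-vertex transfer: given a partition into cycles of lengths $(a,b)$ with $a,b\geq 3$, produce a partition into cycles of lengths $(a-1,b+1)$ by moving a single vertex $v$ from $C_1$ to $C_2$. This requires
\begin{enumerate}[{\rm (a)}]
\item $T[V(C_1)\setminus\{v\}]$ to be strongly connected, so that by Moon's theorem it contains a Hamilton cycle of length $a-1$;
\item $v$ to have both an in-neighbour and an out-neighbour on $V(C_2)$, so that walking around $C_2$ yields consecutive vertices $y,y'$ with $y\to v\to y'$; replacing the arc $yy'$ on $C_2$ by the path $yvy'$ then produces a Hamilton cycle of $T[V(C_2)\cup\{v\}]$ of length $b+1$.
\end{enumerate}
For $a\geq 4$, condition~(a) can be met for at least one vertex, since Moon's vertex-pancyclicity theorem provides, for any fixed $u\in V(C_1)$, a cycle of length $a-1$ through $u$ in the strongly connected tournament $T[V(C_1)]$; this cycle is a Hamilton cycle of $T[V(C_1)]$ minus its unique missed vertex. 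Iterating the transfer (in either direction) monotonically moves $(\ell_1,\ell_2)$ to the target $(L,n-L)$, and the hypothesis $3\leq L\leq n-3$ guarantees we never need to shrink a cycle below length~$3$.

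The main obstacle will be to ensure that (a) and (b) can be met simultaneously by the same vertex $v$. If no single vertex works, one is forced into a highly restricted edge pattern between $V(C_1)$ and $V(C_2)$: every vertex of $V(C_1)$ satisfying (a) must fail (b), which in turn forces such a vertex either to dominate all of $V(C_2)$ or to be dominated by all of $V(C_2)$. Strong $2$-connectivity of $T$ severely restricts these configurations, and the remaining cases should be handled by reversing the direction of transfer, by choosing a different initial Reid partition, or by directly exhibiting the desired cycles. The boundary cases where $a=3$ or $b=3$ are where no internal reduction via Moon is available and where the exceptional $7$-vertex tournament must appear, accounting for the stated exception; verifying this rigid structural dichotomy is, I expect, the most delicate part of the argument.
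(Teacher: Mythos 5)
This theorem is not proved in the paper you were given: it is stated with the citation \cite{Song} attached to the theorem environment itself, and it is imported as a black box in Section~\ref{corollary section} to help derive Theorem~\ref{main corollary}. There is therefore no ``paper's own proof'' to compare your attempt against; the authors rely entirely on Song's original argument.

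Evaluating your sketch on its own terms: the overall plan --- seed with a Reid two-cycle partition, then repeatedly shift one vertex across the bipartition, using Moon's pancyclicity to re-close the shrunken cycle and a single-vertex insertion to re-close the grown one --- is a reasonable outline, but it stops short of a proof at exactly the point you flag. Moon's theorem only guarantees that \emph{some} vertex $v$ of $C_1$ has $T[V(C_1)\setminus\{v\}]$ strongly connected; it gives you no control over \emph{which} vertex that is, and there may be only one. If that lone removable $v$ dominates all of $V(C_2)$, or is dominated by all of $V(C_2)$, then $T[V(C_2)\cup\{v\}]$ is not even strongly connected and the insertion in~(b) is impossible. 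Strong $2$-connectivity of $T$ does not immediately rule this out: it constrains the tournament globally, but you have not shown it forces a vertex that is simultaneously removable from $C_1$ and insertable into $C_2$. The proposed fallbacks (reverse the transfer direction, re-run Reid to get a different initial partition, exhibit the cycles ad~hoc) are stated, not carried out, and you have not shown that at least one of them always succeeds. Likewise, the role of the exceptional $7$-vertex tournament is asserted to emerge from the boundary cases $a=3$ or $b=3$ but is not actually analysed, and the base cases $n=6,7$ (where the target cycle lengths are both $3$, or $3$ and $4$) need separate treatment since the iteration has no room to move. In short, this is a plausible strategy with the crux acknowledged but unresolved, not a proof; to be usable you would need to carry out the structural case analysis you defer, which is substantial.
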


Song~\cite{Song} also posed a question that generalises the question of Bollob\'as. Namely, for any integer $t$, what is the least integer $h(t)$ such that all but a finite number of strongly $h(t)$-connected tournaments admit a partition into $t$ vertex-disjoint cycles of prescribed lengths? Until now, for $t\geq 3$, even the existence of $h(t)$ remained open. The following consequence of Theorem~\ref{main theorem} settles this question in the affirmative.

\begin{theorem}\label{main corollary}
Let $T$ be a tournament on $n$ vertices, let $t\in \mathbb{N}$ with $t\geq 2$ and let $L_1,\dots, L_t\in \mathbb{N}$ with $L_1,\dots, L_t\geq 3$ and $\sum_{j=1}^t L_j=n$. If $T$ is strongly $10^{10}t^4\log t$-connected then $T$ contains $t$ vertex-disjoint cycles of lengths $L_1,\dots, L_t$.
\end{theorem}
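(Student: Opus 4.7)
The plan is to use Theorem~\ref{main theorem} to find, for each prescribed length $L_j$, a small \emph{seed} vertex set $V_j$ such that however we enlarge it by leftover vertices the resulting subtournament is strongly connected; a Hamilton cycle in each extension (guaranteed by Camion's theorem for strongly connected tournaments) then supplies the required cycle of length $L_j$. The subtlety is that Theorem~\ref{main theorem} produces the $V_j$ with a common upper size bound $|V_j|\le n/m$, whereas some prescribed $L_j$ may be as small as $3$, so we cannot in general fit $V_j$ inside a class of size $L_j$; short and long prescribed lengths will be handled separately.

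In detail, I would order $L_1\le\cdots\le L_t$, fix a threshold $\tau$ of order $n/t^2$, and split indices into \emph{long} $\bar S=\{j:L_j\ge\tau\}$ and \emph{short} $S=\{j:L_j<\tau\}$. Then: apply Theorem~\ref{main theorem} with $k=2$, $t'=|\bar S|$ and $m=\lceil n/\tau\rceil=\Theta(t^2)$; the required connectivity is $10^7\cdot 2^6\cdot t^2\cdot m\log(2tm)=O(t^4\log t)$, which sits comfortably inside the hypothesis $10^{10}t^4\log t$. This yields disjoint $V_j$ ($j\in\bar S$) with $|V_j|\le\tau\le L_j$, and with $T[V_j\cup R]$ strongly $2$-connected for every $R\subseteq V(T)\setminus\bigcup_{i\in\bar S}V_i$. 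Next, extract cycles $C_j$ of the short lengths $L_j$ ($j\in S$) one at a time inside the residue $V(T)\setminus\bigcup_{i\in\bar S}V_i$, using Moon's vertex-pancyclicity theorem (every strongly connected tournament on $N$ vertices has cycles of every length from $3$ to $N$). Finally, distribute the remaining vertices into sets $R_j$ ($j\in\bar S$) with $|V_j\cup R_j|=L_j$---feasible because $|V_j|\le L_j$ and the totals match---and read off a Hamilton cycle from each strongly $2$-connected $T[V_j\cup R_j]$, giving a cycle of length $L_j$.

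The main obstacle is the short-cycle step, where one must check that after the seeds and all prior short cycles have been deleted the residue is still strongly connected, so that Moon's theorem applies to produce the next $C_j$. At most $t\cdot n/m+\sum_{j\in S}L_j=O(n/t)$ vertices are removed in total, so the residue has connectivity at least $c-O(n/t)$, which is positive as long as $n=O(tc)=O(t^5\log t)$. In the complementary regime one cannot work globally and I would instead extract each short cycle inside a piece $T[V_{j_0}\cup R]$ for a reserved long seed $V_{j_0}$, whose strong $2$-connectivity from Theorem~\ref{main theorem}(ii) survives the deletion of previously chosen short cycles; coordinating this reservoir with the final distribution of leftovers is where most of the careful bookkeeping of the proof will lie.
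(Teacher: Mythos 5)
Your proposal works in the ``small $n$'' regime $n=O(t^5\log t)$, but the fix you sketch for the complementary regime has a genuine gap. You plan to extract each short cycle of length $L_j$ inside $T[V_{j_0}\cup R]$ via Moon's pancyclicity theorem, and you claim the strong $2$-connectivity of such a piece (from Theorem~\ref{main theorem}(ii)) ``survives the deletion of previously chosen short cycles.'' That survival is true only if the deleted cycles lie entirely in $R$: the conclusion of Theorem~\ref{main theorem}(ii) is that $T[V_{j_0}\cup R']$ is strongly $2$-connected for any $R'\subseteq V(T)\setminus\bigcup_i V_i$, but it says nothing about $T[V_{j_0}'\cup R']$ for a proper subset $V_{j_0}'\subsetneq V_{j_0}$. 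Moon's theorem gives you a cycle of the right length somewhere in $T[V_{j_0}\cup R]$, with no control over whether it meets $V_{j_0}$; once it does, you lose both the connectivity guarantee needed for the next extraction and the usability of $V_{j_0}$ as a seed for a long cycle. There is no cheap way to steer a pancyclicity cycle away from a prescribed small set, so as written the argument does not close.

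The paper avoids this by never extracting short cycles from a shrinking residue. Instead it inflates every short length to $L_j'=\lceil n/t^2\rceil$, absorbs the total inflation into the longest length $L_1$, and covers $V(T)$ by a partition $V_1',\dots,V_t'$ with $V_j\subseteq V_j'$, $|V_j'|=L_j'$ and $T[V_j']$ strongly $2$-connected (this uses Theorem~\ref{main theorem}(ii) with $R=V_j'\setminus V_j$, so no global connectivity bookkeeping and no dependence on $n$). Then it applies \emph{Song's theorem}, not Moon's: inside each small part $T[V_j']$ it produces a vertex-disjoint pair $C_j$ (length $L_j$, kept) and $C_j'$ (length $L_j'-L_j$, the surplus). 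Since $|C_j'|>|V_j|$ one can pick a vertex $v_j\in V(C_j')\setminus V_j$; taking $R=\{v_j\}_{j\in\tilde J}$, Theorem~\ref{main theorem}(ii) makes $T[V_1'\cup R]$ strongly connected, and routing through the $v_j$'s and around the cycles $C_j'$ shows $T[V_1'\cup\bigcup_j V(C_j')]$ is strongly connected, so Camion finishes. The two key ideas your proposal is missing are exactly these: producing the leftover as a \emph{cycle} $C_j'$ (via Song) rather than a bare vertex set, and gluing all leftovers onto the largest part through designated attachment vertices $v_j$ chosen outside the seed. If you want to salvage your approach, replacing Moon by Song inside each part and adding the gluing step would essentially reconstruct the paper's proof.
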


Camion's theorem (see~\cite{Camion}) states that every strongly connected tournament contains a Hamilton cycle. So certainly $g(1)=h(1)=1$. Note that Song~\cite{Song} showed that $g(2)=h(2)=2$. Clearly $g(k)\leq h(k)$ for all $k$. Song~\cite{Song} conjectured that $g(k)=h(k)$ for all $k$. Showing that $h(k)$ is linear would already be a very interesting step towards this.

Theorem~\ref{main corollary} has a similar flavour to the El-Zahar conjecture. This determines the minimum degree which guarantees a partition of a graph into vertex-disjoint cycles of prescribed lengths and was proved for all large $n$ by Abbasi~\cite{Abbasi}. A related result to Theorem~\ref{main corollary} for oriented graphs (where the assumption of connectivity is replaced by that of high minimum semidegree) was proved by Keevash and Sudakov~\cite{KS}.

The rest of the paper is organised as follows. In Section~\ref{notation} we lay out some notation, set out some useful tools, and prove some preliminary results. Section~\ref{main section} is the heart of the paper in which we prove Theorem~\ref{main theorem}. In Section~\ref{corollary section} we deduce Theorem~\ref{main corollary}.

\section{Notation, tools and preliminary results}\label{notation}

We write $|T|$ for the number of vertices in a tournament $T$. We denote the in-degree of a vertex $v$ in a tournament $T$ by $d_T^-(v)$, and we denote the out-degree of $v$ in $T$ by $d_T^+(v)$. We say that a set $A\subseteq V(T)$ \textit{in-dominates} a set $B\subseteq V(T)$ if for every vertex $b\in B$ there exists a vertex $a\in A$ such that there is an edge in $T$ directed from $b$ to $a$. Similarly, we say that a set $A\subseteq V(T)$ \textit{out-dominates} a set $B\subseteq V(T)$ if for every vertex $b\in B$ there exists a vertex $a\in A$ such that there is an edge in $T$ directed from $a$ to $b$. We denote the \textit{minimum semidegree} of $T$ (that is, the minimum of the minimum in-degree of $T$ and the minimum out-degree of $T$) by $\delta^0(T)$. We say that a tournament $T$ is \textit{transitive} if we may enumerate its vertices $v_1,\dots, v_m$ such that there is an edge in $T$ directed from $v_i$ to $v_j$ if and only if $i<j$. In this case we call $v_1$ the \textit{source} of $T$ and $v_m$ the \textit{sink} of $T$. The \textit{length} of a path is the number of edges in the path. If $P=x_1\dots x_\ell$ is a path directed from $x_1$ to $x_\ell$ then we denote the set $\{x_1,\dots, x_\ell \}\backslash \{x_1, x_\ell\}$ of interior vertices of $P$ by $\text{\textnormal{Int}}(P)$, and if $1\leq i< j\leq \ell$ we say that $x_i$ is an \textit{ancestor} of $x_j$ in $P$ and that $x_j$ is an \textit{descendant} of $x_i$ in $P$. We say that an ordered pair of vertices $(x, y)$ is $k$-\textit{connected} in a tournament $T$ if the removal of any set $S\subseteq V(T)\backslash \{x, y\}$ of fewer than $k$ vertices from $T$ results in a tournament containing a directed path from $x$ to $y$. A tournament $T$ is called \textit{k-linked} if $|T|\geq 2k$ and whenever $x_1,\dots, x_k, y_1, \dots, y_k$ are $2k$ distinct vertices in $V(G)$ there exist vertex-disjoint paths $P_1,\dots, P_k$ such that $P_i$ is a directed path from $x_i$ to $y_i$ for each $i\in \{1,\dots, k\}$. For clarity we may sometimes refer to a strongly connected tournament as a strongly $1$-connected tournament. Throughout the paper we write $\log x$ to mean $\log_2 x$.

We now collect some preliminary results that will prove useful to us. The following proposition follows straightforwardly from the definition of linkedness.
\begin{proposition}\label{linkedness alt def}
Let $k\in \mathbb{N}$. Then a tournament $T$ is $k$-linked if and only if $|T|\geq 2k$ and whenever $(x_1,y_1), \dots, (x_k,y_k)$ are ordered pairs of (not necessarily distinct) vertices of $T$, there exist distinct internally vertex-disjoint paths $P_1, \dots, P_k$ such that for all $i\in \{1,\dots, k\}$ we have that $P_i$ is a directed path from $x_i$ to $y_i$ and that $\{x_1,\dots, x_k, y_1, \dots, y_k\}\cap V(P_i)=\{x_i, y_i\}$.
\end{proposition}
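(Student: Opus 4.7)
The $(\Leftarrow)$ direction is a definition chase. Given any $2k$ distinct vertices $x_1, \dots, x_k, y_1, \dots, y_k$ of $T$, apply the hypothesised condition to obtain internally vertex-disjoint paths $P_i$ from $x_i$ to $y_i$ with $V(P_i) \cap S = \{x_i, y_i\}$, where $S$ denotes $\{x_1, \dots, y_k\}$. If some vertex $v$ lay in $V(P_i) \cap V(P_j)$ for distinct $i, j$, then internal disjointness would force $v$ to be an endpoint of at least one of the two paths and hence to lie in $S$; combining with the intersection hypothesis gives $v \in \{x_i, y_i\} \cap \{x_j, y_j\}$, which is empty by distinctness of the $2k$ specified vertices. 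So the $P_i$ are fully vertex-disjoint, verifying the standard $k$-linked definition.

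For the $(\Rightarrow)$ direction, my strategy is to reduce any coincident-endpoint instance back to the distinct-endpoint case via local proxy substitutions. First I would verify that $k$-linkedness of $T$ implies $k$-connectedness: given any separator $U$ of size less than $k$ and $u, v \notin U$, the pair $(u, v)$ together with $k - 1$ auxiliary pairs whose $2k - 2$ distinct endpoints cover $U$ can be linked by the standard definition, and the path from $u$ to $v$ must avoid $U$ by vertex-disjointness. In particular $\delta^0(T) \ge k$. One also dismisses trivially any pair with $x_i = y_i$ by taking $P_i$ to be the single vertex $x_i$.

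For the remaining pairs, set $S := \{x_1, \dots, y_k\}$ and for each $i$ construct proxies as follows: if $x_i$ is shared with any other endpoint in the multiset $\{x_1,\dots,y_k\}$, replace it by an out-neighbour $x_i^*$ chosen outside $S$ and distinct from previously chosen proxies; otherwise take $x_i^* := x_i$. Define $y_i^*$ symmetrically via in-neighbours. The bound $\delta^0(T) \ge k$ combined with $|T| \ge 2k$ should let us realise the $2k$ proxy endpoints as pairwise distinct through a careful bookkeeping. Applying the standard $k$-linked definition to $(x_1^*, y_1^*), \dots, (x_k^*, y_k^*)$ yields vertex-disjoint paths $Q_1, \dots, Q_k$, and $P_i$ is then obtained by prepending the single edge $x_i x_i^*$ and appending $y_i^* y_i$ wherever these proxies differ from the originals.

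The hard part is ensuring each $P_i$ meets $S$ only in $\{x_i, y_i\}$; equivalently, that the interior of each $Q_i$ avoids the vertices of $S$ that are neither $x_i^*$ nor $y_i^*$. I would handle this by a case analysis on the type of coincidence ($x_i = x_j$, $x_i = y_j$, or $y_i = y_j$), using the direct edge $x_i y_i$ whenever it is present in $T$ (so that $P_i$ has empty interior and nothing to verify), and otherwise rerouting $Q_i$ inside an induced subtournament $T - W$ for a small $W \subset S$ while exploiting the semi-degree slack $\delta^0(T) \ge k$ to reseed the linkage when $Q_i$ threatens to touch $S$. The counting works because $|S|\le 2k$ while out- and in-neighbourhoods have size at least $k$, leaving room for the construction throughout.
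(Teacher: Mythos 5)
The paper states this proposition without proof, remarking only that it ``follows straightforwardly from the definition of linkedness,'' so there is no reference argument to compare against; I assess your proposal on its own.

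Your $(\Leftarrow)$ direction is correct and complete. The $(\Rightarrow)$ direction, however, has genuine gaps. First, the degree bound $\delta^{0}(T)\ge k$ is too weak for the bookkeeping you invoke: a vertex could have all $k$ of its out-neighbours inside $S$ (which may have $2k$ elements), leaving no candidate proxy. What one actually needs, and what does follow from $k$-linkedness by essentially the same auxiliary-pairs argument you use for connectivity (take $y_1\in N^{-}(v)$ and let the remaining $2k-2$ endpoints contain all of $N^{+}(v)$), is $\delta^{0}(T)\ge 2k-1$; you neither prove nor use this, and even this is exactly on the boundary, so one must combine it with your direct-edge observation ($N^{+}(v)\supseteq S\setminus\{v\}$ forces $v\to y_i$ for the relevant pairs). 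Second, and more fundamentally, your substitution rule replaces \emph{every} occurrence of a shared endpoint by a proxy outside $S$, so the shared vertex $v$ is then absent from the $2k$ vertices fed into the standard linkage. Nothing prevents the vertex-disjoint paths $Q_i$ from threading through $v$ in their interiors, which would place $v\in S$ in the interior of some $P_j$ and violate the required property; this is precisely where your ``rerouting/reseeding'' paragraph should carry the load, but it is a description of intent rather than an argument. The fix is to keep exactly one occurrence of each element of $S$ among the $2k$ linkage endpoints (promoting it to a dummy endpoint if all its original pairs were handled by direct edges); once every element of $S$ is a linkage endpoint, the pairwise vertex-disjointness of the $Q_i$ automatically forces their interiors to avoid $S$, and the ``hard part'' you flag disappears. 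In short, the proxy idea is viable, but your counting is off by a factor of two, the keep-one-occurrence step is missing, and the final paragraph does not close the argument it acknowledges is the crux.
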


\begin{proposition}\label{linkedness implies short paths}
Let $k, s\in \mathbb{N}$ and let $T$ be a $ks$-linked tournament. Let $(x_1,y_1), \dots, (x_k,y_k)$ be ordered pairs of (not necessarily distinct) vertices of $T$. Then there exist distinct internally vertex-disjoint paths $P_1, \dots, P_k$ such that for all $i\in \{1,\dots, k\}$ we have that $P_i$ is a directed path from $x_i$ to $y_i$ with $\{x_1,\dots, x_k, y_1, \dots, y_k\}\cap V(P_i)=\{x_i, y_i\}$ and such that $|\text{\textnormal{Int}}(P_1)\cup \dots \cup \text{\textnormal{Int}}(P_k)|\leq |T|/s$.
\end{proposition}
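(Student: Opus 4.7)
The plan is to apply Proposition~\ref{linkedness alt def} to the list of $ks$ ordered pairs obtained by duplicating each $(x_i,y_i)$ exactly $s$ times, and then to select the best of the resulting $s$ sub-systems by averaging.

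More precisely, I would consider the $ks$ ordered pairs $(x_i^{(j)},y_i^{(j)}):=(x_i,y_i)$ for $i\in\{1,\dots,k\}$ and $j\in\{1,\dots,s\}$. Since $T$ is $ks$-linked (so in particular $|T|\geq 2ks$), Proposition~\ref{linkedness alt def}, which explicitly allows the pairs to be non-distinct, produces $ks$ distinct and pairwise internally vertex-disjoint directed paths $\{P_i^{(j)}\}$, where each $P_i^{(j)}$ runs from $x_i$ to $y_i$ and satisfies
\[
\{x_1,\dots,x_k,y_1,\dots,y_k\}\cap V(P_i^{(j)})=\{x_i,y_i\}.
\]

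For each $j\in\{1,\dots,s\}$ the subfamily $(P_1^{(j)},\dots,P_k^{(j)})$ is itself a system of $k$ distinct internally vertex-disjoint paths meeting all of the required endpoint conditions, so the only remaining task is to show that at least one index $j$ gives a small total interior. Since all $ks$ of the paths $P_i^{(j)}$ are pairwise internally vertex-disjoint, the sets $\text{\textnormal{Int}}(P_1^{(j)})\cup\dots\cup\text{\textnormal{Int}}(P_k^{(j)})$ for $j=1,\dots,s$ are pairwise disjoint subsets of $V(T)$, whence
\[
\sum_{j=1}^{s}\bigl|\text{\textnormal{Int}}(P_1^{(j)})\cup\dots\cup\text{\textnormal{Int}}(P_k^{(j)})\bigr|\leq |T|.
\]
By averaging there exists some $j^{*}$ with $|\text{\textnormal{Int}}(P_1^{(j^{*})})\cup\dots\cup\text{\textnormal{Int}}(P_k^{(j^{*})})|\leq |T|/s$, and taking $P_i:=P_i^{(j^{*})}$ for each $i\in\{1,\dots,k\}$ yields the desired system.

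There is no genuine obstacle here: the only points to verify are that Proposition~\ref{linkedness alt def} legitimately applies to a multiset of pairs (it does) and that the $ks$ resulting paths are pairwise internally vertex-disjoint (this is precisely the conclusion of that proposition). The averaging step then gives the stated upper bound on the total number of interior vertices for free.
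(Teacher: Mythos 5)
Your proof is correct and follows essentially the same route as the paper: apply Proposition~\ref{linkedness alt def} to the $s$-fold duplicated list of pairs to get $ks$ pairwise internally vertex-disjoint paths, then choose the best of the $s$ subfamilies by averaging. The paper states the averaging step a bit more tersely, but the argument is identical.
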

\begin{proof}
By Proposition \ref{linkedness alt def} $T$ contains $ks$ distinct internally vertex-disjoint paths $P_1^1, \dots, P_k^s$ such that for all $i\in \{1,\dots, k\}$ and $j\in \{1,\dots, s\}$ we have that $P_i^j$ is a directed path from $x_i$ to $y_i$ and that $\{x_1,\dots, x_k, y_1, \dots, y_k\}\cap V(P_i^j)=\{x_i, y_i\}$. The disjointness of the paths implies that there is a $j\in \{1,\dots, s\}$ with $|\text{\textnormal{Int}}(P_1^j)\cup \dots \cup \text{\textnormal{Int}}(P_k^j)|\leq |T|/s$. So the result follows by setting $P_i:= P_i^j$ for all $i\in \{1,\dots, k\}$.
\end{proof}

We will also use the following theorem from~\cite{KLOP} in proving Theorem~\ref{main theorem}.

\begin{theorem}\label{connectivity implies linkedness}\cite{KLOP}
For all $k\in \mathbb{N}$ with $k\geq 2$ every strongly $10^4 k \log k$-connected tournament is $k$-linked.
\end{theorem}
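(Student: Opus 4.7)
The plan is to show that a strongly $10^4 k \log k$-connected tournament $T$ is $k$-linked by constructing, for any prescribed pairs $(x_1,y_1),\dots,(x_k,y_k)$, a system of internally disjoint paths routed through a small but well-structured ``hub'' $H\subseteq V(T)$. The hub will be chosen small enough that the terminals $x_i$ and $y_i$ can be linked into and out of $H$ using the connectivity of $T$, but large and rich enough that $k$ disjoint paths inside $H$ can be found joining the entry points to the exit points.

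First I would establish a sampling lemma: including each vertex of $T$ in $H$ independently with probability $p = C\log(k)/k$ yields (with positive probability) a set $H$ of size $O(n\log k/k)$ such that every vertex $v \in V(T)\setminus H$ has at least $ck$ in-neighbours and $ck$ out-neighbours in $H$. This is where the $\log k$ factor enters, via Chernoff bounds and a union bound; the usual degree lower bounds in a tournament (where every vertex has in-degree or out-degree at least $(|T|-1)/2$) together with the high connectivity guarantee that no vertex is too lopsided. Next, using Menger's theorem one can find, one pair at a time, short internally disjoint paths $Q_i^{\mathrm{in}}$ from $x_i$ into $H$ and $Q_i^{\mathrm{out}}$ from $H$ to $y_i$, with the collection $\{Q_i^{\mathrm{in}}, Q_i^{\mathrm{out}}\}_{i=1}^k$ mutually internally disjoint. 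Since the connectivity of $T$ after discarding already-used vertices remains well above $k$ at each step, this greedy construction succeeds, and for each $i$ we obtain an \emph{entry vertex} $a_i \in H$ and an \emph{exit vertex} $b_i \in H$ with which we must link $a_i$ to $b_i$ inside $H$.

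The hardest step is this inner linkage: we need $T[H]$ to itself be $k$-linked, which looks circular. The resolution I envisage mirrors the Bollobás--Thomason argument for undirected graphs: one shows that (with positive probability in the random choice of $H$, or via a deterministic extraction using the connectivity hypothesis) $T[H]$ has very large minimum semidegree, say $\delta^0(T[H]) \ge (1/2 - o(1))|H|$. In such a tournament, any two vertices can be joined by a path of length $O(1)$ that avoids any prescribed set of $O(k)$ forbidden vertices, and so the $k$ disjoint interior paths $a_i \to b_i$ can be built greedily, one at a time, each of constant length. Combining $Q_i^{\mathrm{in}}$, the inner path $a_i \to b_i$, and $Q_i^{\mathrm{out}}$ yields the required path from $x_i$ to $y_i$.

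The main obstacle is the balance between the two roles of $H$: $H$ must be small (so that the sum of degrees into $H$ is reasonable and the inner linkage is tractable) yet dense enough to contain all the required disjoint linking paths inside itself. The key technical input is the degree/semidegree concentration inside $H$, which both controls the routing from the terminals into $H$ and powers the final greedy linkage. If that can be pushed through cleanly, the connectivity requirement $10^4 k \log k$ on $T$ should suffice.
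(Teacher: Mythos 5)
This theorem is quoted from [KLOP] and not proved in the present paper, so I am comparing your proposal to the argument in the cited source. The overall template --- build a small ``linkage structure,'' route each terminal pair into and out of it, and finish the linkage internally --- is indeed the right shape, and it is also the shape of the KLOP proof. But the mechanism you propose for the inner linkage does not work, and this is the crux of the whole theorem.

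The fatal step is the claim that one can arrange $\delta^0(T[H]) \ge (1/2 - o(1))|H|$. High strong connectivity of $T$ only gives $\delta^0(T) \ge 10^4 k \log k$; it says nothing about $\delta^0(T)$ being a constant fraction of $n$. A strongly $10^4 k\log k$-connected tournament on $n$ vertices may well contain a vertex $v$ with in-degree exactly $10^4 k\log k$, and for $n \gg k\log k$ this is a vanishing proportion of $n$. Random sampling at rate $p$ preserves this proportion (Chernoff concentrates $d^-_{T[H]}(v)$ around $p\,d^-_T(v)$, not around $p\,n/2$), so if $v\in H$ it will still have in-degree $O(p\,k\log k)$ in $H$, which is $o(|H|)$. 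Thus the ``semidegree concentration'' you invoke gives the wrong thing: it does not boost the relative semidegree, and the greedy constant-length linkage inside $H$ that you plan to run on top of it has no foundation. Deterministic extraction does not rescue this either, because the obstruction is intrinsic --- the tournament simply need not contain any induced subtournament of the right size with near-regular semidegree.

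The KLOP proof (and the argument in Section 3 of this paper, which extends it) resolves exactly this difficulty by a different device: rather than a random hub of large minimum semidegree, one constructs, for each pair $(x_i,y_i)$, a small \emph{in-dominating} transitive subtournament $B_i$ and a small \emph{out-dominating} transitive subtournament $A_i$ (Lemmas \ref{out-dominating sets} and \ref{in-dominating sets}, via the greedy median-vertex argument that exists in \emph{every} tournament), together with short disjoint connecting paths $P_i$ from the sink of $B_i$ to the source of $A_i$. Linking $x_i$ to $y_i$ then uses a single edge into $B_i$, transitivity within $B_i$ and $A_i$, and the path $P_i$; the $\log k$ factor arises from the size of these dominating sets, not from a union bound over a random sample. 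The residual difficulty is the small exceptional sets $E_{A_i}, E_{B_i}$ of vertices not dominated, which is handled by a separate bookkeeping argument (the ``safe vertex'' machinery here). Your proposal, as it stands, has no analogue of the dominating-set step and relies on a degree condition that the hypotheses do not supply, so it would need a substantially different engine for the inner linkage to go through.
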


The following lemma, which we will also use in proving Theorem~\ref{main theorem}, is very similar to Lemma~$8.3$ in \cite{KLOP}. The proof proceeds by greedily choosing vertices $v_1=v, v_2, \dots, v_i$ such that the size of their common in-neighbourhood is minimised at each step. We omit the proof\COMMENT{
Let $v_1:=v$. We will find $A$ by repeatedly choosing vertices $v_1,\dots, v_i$ such that the size of their common in-neighbourhood is minimised at each step. More precisely, let $A_1:= \{v_1\}$. Suppose that for some $i\leq c$ we have already found a set $A_i=\{v_1,\dots, v_i\}$ such that $T[A_i]$ is a transitive tournament with sink $v_1$, and such that the common in-neighbourhood $E_i$ of $v_1,\dots, v_i$ in $T$ satisfies
$$|E_i|\leq \left( \frac{1}{2} \right)^{i-1}d^-_T(v).$$
Note that these conditions are satisfied for $i=1$. Moreover, note that $E_i$ is the set of all those vertices in $V(T)\backslash A_i$ that are not out-dominated by $A_i$.
If $|E_i|=0$ or $i=c$ then $A_i, E_i$ clearly satisfy {\rm (i)--(iv)}, so we may take $A:=A_i$, $E:=E_i$ and be done. So suppose next that $i< c$ and that $|E_i|\geq 1$. In this case we will extend $A_i$ to $A_{i+1}$ by adding a suitable vertex $v_{i+1}$. By Proposition~$6.1$ in~\cite{KLOP}, $E_i$ contains a vertex $v_{i+1}$ of in-degree at most $|E_i|/2$ in $T[E_i]$. Let $A_{i+1}:=\{v_1,\dots, v_{i+1}\}$ and let $E_{i+1}$ be the common in-neighbourhood of $v_1,\dots, v_{i+1}$ in $T$. Then $T[A_{i+1}]$ is a transitive tournament with sink $v_1$ and
$$|E_{i+1}|\leq \frac{1}{2}|E_i|\leq \left( \frac{1}{2} \right)^{i}d^-_T(v).$$
By repeating this construction, either we will find $|E_i|=0$ for some $i< c$ (and therefore take $A:=A_i$, $E:=E_i$) or we will obtain sets $A_c, E_c$ satisfying {\rm (i)--(iv)}.
} since it is almost identical to the one in~\cite{KLOP}.
\begin{lemma}\label{out-dominating sets}
Let $T$ be a tournament, let $v\in V(T)$ and suppose $c\in \mathbb{N}$. Then there exist disjoint sets $A, E\subseteq V(T)$ such that the following properties hold:
\begin{enumerate}[{\rm (i)}]
\item $1\leq |A|\leq c$ and $T[A]$ is a transitive tournament with sink $v$,
\item either $E=\emptyset$ or $E$ is the common in-neighbourhood of all vertices in $A$,
\item $A$ out-dominates $V(T)\backslash (A\cup E)$,
\item $|E|\leq (1/2)^{c-1}d^-_T(v)$.
\end{enumerate}
\end{lemma}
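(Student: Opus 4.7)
I would follow the hint sketched by the authors and build $A$ by a greedy halving procedure. Set $A_1 := \{v\}$, and for each $i\geq 1$ let $E_i$ denote the common in-neighbourhood in $T$ of the vertices in $A_i$ (so $E_1 = N_T^-(v)$ has size $d_T^-(v)$). Throughout the construction I would maintain the two invariants that $T[A_i]$ is transitive with sink $v$ and that $|E_i|\leq (1/2)^{i-1}d_T^-(v)$; both clearly hold for $i=1$. The process stops as soon as either $i=c$ or $E_i=\emptyset$, at which point I set $A := A_i$ and $E := E_i$. Note that $A_i\cap E_i = \emptyset$ automatically, since no vertex is its own in-neighbour.

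For the inductive step, suppose $i<c$ and $E_i\neq \emptyset$. Since $T[E_i]$ is a tournament, an averaging argument (the sum of in-degrees in $T[E_i]$ equals $\binom{|E_i|}{2}$, so some vertex has in-degree at most $(|E_i|-1)/2$) produces a vertex $v_{i+1}\in E_i$ with in-degree at most $|E_i|/2$ in $T[E_i]$. Set $A_{i+1} := A_i\cup\{v_{i+1}\}$. Because $v_{i+1}\in E_i$, we have $v_{i+1}\to v_j$ for every $v_j\in A_i$, so $T[A_{i+1}]$ remains transitive, with $v_{i+1}$ slotting in as the new source while $v$ stays as the sink. Moreover $E_{i+1} = E_i\cap N_T^-(v_{i+1})$ is precisely the in-neighbourhood of $v_{i+1}$ within $T[E_i]$, so $|E_{i+1}|\leq |E_i|/2$, preserving the halving invariant.

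Once the construction terminates, the four conditions are verified directly. Property (i) is built in: $|A|\leq c$ and $T[A]$ is transitive with sink $v$. Property (ii) holds because $E$ is either empty (if we stopped due to $E_i=\emptyset$) or the common in-neighbourhood of $A$ by definition. Property (iv) follows from the halving bound: $|E|\leq (1/2)^{|A|-1}d_T^-(v)\leq (1/2)^{c-1}d_T^-(v)$ when $E\neq\emptyset$, and trivially otherwise. For (iii), any $u\in V(T)\setminus(A\cup E)$ fails to lie in the common in-neighbourhood of $A$, so some $v_j\in A$ satisfies $v_j\to u$, which is exactly what it means for $A$ to out-dominate $u$.

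There is essentially no obstacle in this argument, which is consistent with the authors' decision to omit it. The only minor subtleties are keeping track of where each newly chosen $v_{i+1}$ sits in the transitive order on $A_{i+1}$ (it is always inserted as the current source, precisely because it lies in the common in-neighbourhood of everything already chosen), and noting that the halving starts from $|E_1|=d_T^-(v)$ rather than $d_T^-(v)/2$, which is why the exponent $c-1$ (and not $c$) is the right one in (iv).
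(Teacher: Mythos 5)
Your proof is correct and follows the same greedy halving construction that the paper uses (and sketches in the omitted proof): repeatedly adjoin a vertex of small in-degree within the current common in-neighbourhood, halving that set at each step. The only cosmetic difference is that you re-derive the ``median vertex'' fact via the averaging identity $\sum d^- = \binom{|E_i|}{2}$ instead of citing the corresponding proposition in~\cite{KLOP}.
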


The next lemma follows immediately from Lemma \ref{out-dominating sets} by reversing the orientations of all edges.

\begin{lemma}\label{in-dominating sets}
Let $T$ be a tournament, let $v\in V(T)$ and suppose $c\in \mathbb{N}$. Then there exist disjoint sets $B, E\subseteq V(T)$ such that the following properties hold:
\begin{enumerate}[{\rm (i)}]
\item $1\leq |B|\leq c$ and $T[B]$ is a transitive tournament with source $v$,
\item either $E=\emptyset$ or $E$ is the common out-neighbourhood of all vertices in $B$,
\item $B$ in-dominates $V(T)\backslash (B\cup E)$,
\item $|E|\leq (1/2)^{c-1}d^+_T(v)$.
\end{enumerate}
\end{lemma}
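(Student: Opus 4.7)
The plan is to deduce Lemma \ref{in-dominating sets} from Lemma \ref{out-dominating sets} via a duality argument based on edge reversal. Let $T'$ denote the tournament obtained from $T$ by reversing the orientation of every edge. The key observation is that the operation $T \mapsto T'$ exchanges in- and out-neighbourhoods at every vertex, and more generally swaps each concept in Lemma \ref{out-dominating sets} with its counterpart in Lemma \ref{in-dominating sets}: sinks become sources (and transitive tournaments remain transitive), out-domination becomes in-domination, common in-neighbourhoods become common out-neighbourhoods, and $d^-_{T'}(v) = d^+_T(v)$.

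The proof would then be a single-line application: apply Lemma \ref{out-dominating sets} to $T'$ with the same vertex $v$ and the same constant $c$, obtaining disjoint sets $A, E \subseteq V(T') = V(T)$ satisfying properties (i)--(iv) of that lemma in $T'$. Setting $B := A$ and keeping the same $E$, one reinterprets each of the four properties in $T$: (i) $T'[A]$ being transitive with sink $v$ means $T[B]$ is transitive with source $v$; (ii) $E$ being the common in-neighbourhood of $A$ in $T'$ means $E$ is the common out-neighbourhood of $B$ in $T$; (iii) $A$ out-dominating $V(T') \setminus (A \cup E)$ in $T'$ translates to $B$ in-dominating $V(T) \setminus (B \cup E)$ in $T$; and (iv) the bound $|E| \le (1/2)^{c-1} d^-_{T'}(v) = (1/2)^{c-1} d^+_T(v)$ is exactly the desired bound.

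There is no real obstacle here; the only care required is to verify that each property translates cleanly under edge reversal, which it does by definition. This is precisely the reason the author states the lemma follows immediately from the previous one.
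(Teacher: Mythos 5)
Your proposal is correct and is exactly the approach the paper takes: the paper states that Lemma \ref{in-dominating sets} ``follows immediately from Lemma \ref{out-dominating sets} by reversing the orientations of all edges,'' and your verification of how each of the four properties translates under edge reversal is accurate.
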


The following well-known observation will be useful in proving the subsequent technical lemma, which is essential to the proof of Theorem~\ref{main theorem}.\COMMENT{Proof of Prop: The proposition is clearly true if $|T|< 2k$. So suppose $|T|\geq 2k$. Consider a tournament $T'$ induced on $T$ by $2k$ vertices of lowest out-degree in $T$. Then $T'$ has $\binom{2k}{2}$ edges and so
$$\sum_{v\in V(T')}d_T^+(v)\geq \sum_{v\in V(T')}d_{T'}^+(v)=\binom{2k}{2}>2k(k-1).$$
So any set of $2k$ vertices of lowest out-degree in $T$ cannot entirely consist of vertices having out-degree at most $k-1$. Similarly it can be shown that any set of $2k$ vertices of lowest in-degree in $T$ cannot entirely consist of vertices having in-degree at most $k-1$.}

\begin{proposition}\label{technical proposition}
Let $k\in \mathbb{N}$ and let $T$ be a tournament. Then $T$ contains less than $2k$ vertices of out-degree less than $k$, and $T$ contains less than $2k$ vertices of in-degree less than $k$.
\end{proposition}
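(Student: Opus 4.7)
The plan is to prove both statements by a short double-counting argument together with averaging, and to deduce the in-degree statement from the out-degree statement via edge-reversal symmetry.

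First I would suppose, for contradiction, that $T$ has a set $S$ of $2k$ vertices each of out-degree (in $T$) less than $k$. Restricting attention to the subtournament $T[S]$, I would use the basic identity that the sum of out-degrees in any tournament equals the number of edges, so
\[
\sum_{v\in S} d^+_{T[S]}(v) \;=\; \binom{2k}{2} \;=\; k(2k-1).
\]
Since out-degrees within $T[S]$ are at most out-degrees within $T$, this gives
\[
\sum_{v\in S} d^+_{T}(v) \;\geq\; k(2k-1) \;>\; 2k(k-1).
\]
By averaging, some vertex $v\in S$ must satisfy $d^+_T(v)\geq k$, contradicting the choice of $S$. Hence fewer than $2k$ vertices of $T$ can have out-degree less than $k$.

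For the in-degree statement I would simply apply the out-degree bound to the tournament $T'$ obtained from $T$ by reversing every edge; since $d^-_T(v)=d^+_{T'}(v)$, the conclusion is immediate.

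No step looks like a genuine obstacle; the only thing to be slightly careful about is the strictness of the inequality $k(2k-1)>2k(k-1)$, which is why the count $2k$ (rather than $2k-1$) suffices to force a vertex of out-degree at least $k$.
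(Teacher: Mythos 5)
Your proof is correct, and it is essentially the same double-counting argument the paper gives (the paper phrases it by taking the $2k$ vertices of lowest out-degree and showing they cannot all have out-degree at most $k-1$, which amounts to the same thing as your contradiction setup), with the in-degree case likewise obtained by reversing all edges.
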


We call a non-empty tournament $Q$ a \textit{backwards-transitive path} if we may enumerate the vertices of $Q$ as $q_1,\dots, q_{|Q|}$ such that there is an edge in $Q$ from $q_i$ to $q_j$ if and only
if either $j=i+1$ or $i\geq j+2$. The following lemma shows that if a tournament $T$ can be split into vertex-disjoint backwards transitive paths then there exist small (not necessarily disjoint) sets $U$ and $W$ which are `quickly reachable in a robust way'.

\begin{lemma}\label{technical lemma}
Let $k, \ell \in \mathbb{N}$ and let $T$ be a tournament on vertex set $V=Q_1\dot{\cup}\dots \dot{\cup} Q_\ell$, with $|Q_j|\geq k+1$ for all $j\in \{1,\dots ,\ell\}$. Suppose that, for each $j\in \{1,\dots ,\ell\}$, $T[Q_j]$ is a backwards-transitive path. Then there exist sets $U, W, U', W'$ satisfying the following properties:
\begin{itemize}
\item $U\subseteq U'\subseteq V(T)$ and $W\subseteq W'\subseteq V(T)$,
\item $|U|,|W|\leq 2k(k+1)$ and $|U'|,|W'|= \ell(k+1)$,
\item for any set $S\subseteq V(T)$ of size at most $k-1$, and for every vertex $v$ in $V(T)\backslash S$, there exists a directed path (possibly of length $0$) in $T[(U'\cup \{v\})\backslash S]$ from $v$ to a vertex in $U$ and a directed path in $T[(W'\cup \{v\})\backslash S]$ from a vertex in $W$ to $v$.
\end{itemize}
\end{lemma}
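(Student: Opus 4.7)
\proof
Let $m_j := |Q_j|$ and for each $j \in \{1,\ldots,\ell\}$ enumerate the vertices of $Q_j$ as $q_1^j,\ldots,q_{m_j}^j$ according to the backwards-transitive path structure. The plan is to take $U_j':= \{q_1^j,\ldots,q_{k+1}^j\}$ (the ``head'' of $Q_j$) and $W_j':=\{q_{m_j-k}^j,\ldots,q_{m_j}^j\}$ (the ``tail'' of $Q_j$), and set $U':=\bigcup_j U_j'$ and $W':=\bigcup_j W_j'$, so $|U'|=|W'|=\ell(k+1)$ is immediate. The key property motivating this choice is that the long backward edges of $T[Q_j]$ give, for any $v=q_i^j$ with $i\geq k+2$, at least $k$ out-neighbours $q_1^j,\ldots,q_k^j$ inside $U_j'$, and symmetrically for $W_j'$.

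To produce $U$, I would apply Proposition~\ref{technical proposition} to the subtournament $T[U']$: the set $X\subseteq U'$ of vertices of out-degree less than $k$ in $T[U']$ satisfies $|X|<2k$. Letting $J:=\{j: X\cap U_j'\neq\emptyset\}$, we have $|J|\leq |X|<2k$, so $U:=\bigcup_{j\in J}U_j'$ satisfies $|U|<2k(k+1)$ as required. The set $W$ is constructed symmetrically from the set of vertices of in-degree less than $k$ in $T[W']$. Note that $X\subseteq U$, so every vertex of $U'\setminus U$ has out-degree at least $k$ in $T[U']$; an entirely analogous statement holds for $W$.

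For the reachability property, fix $v\in V(T)\setminus S$ with $|S|\leq k-1$. If $v\in U$ there is nothing to prove, so assume otherwise. First, I handle the \emph{entry step}: if $v\notin U'$, write $v=q_i^j$ with $i\geq k+2$; then $v$ has at least $k$ out-neighbours in $U_j'\subseteq U'$, and since $|S|\leq k-1$ some out-neighbour $w\in U_j'\setminus S$ is available, so we may move to $w$. If $v\in U'\setminus U$, then $v\notin X$ and hence $v$ already has out-degree at least $k$ in $T[U']$, so analogously we find some $w\in U'\setminus S$ with $v\to w$ (and we may even take $w=v$ and skip this step). Either way, the problem reduces to showing: from any $w\in U'\setminus(U\cup S)$, there is a directed path in $T[U'\setminus S]$ from $w$ to some vertex of $U$.

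The \emph{navigation step} is the crux. My intended approach is to take a maximum-length directed path $P$ in $T[U'\setminus S]$ starting at $w$, with endpoint $u$. If $u\in U$ we are done. If $u\notin U$, then $u\notin X$, so $u$ has out-degree at least $k$ in $T[U']$; since $|S|\leq k-1$, $u$ has at least one out-neighbour in $T[U'\setminus S]$, and by maximality of $P$ every such out-neighbour lies on $P$, producing a directed cycle $C$ contained in $U'\setminus(U\cup S)$. The main obstacle is to rule out this cycle case. The intended resolution is to pass to the terminal strongly connected component $C^*$ of the subgraph of $T[U'\setminus S]$ reachable from $w$, observe that $C^*$ is closed under out-edges in $T[U'\setminus S]$, and then exploit both Proposition~\ref{technical proposition} applied inside $C^*$ and the specific backwards-transitive structure within each $U_{j'}'$ meeting $C^*$ (in particular, the fact that any $q_s^{j'}\in C^*$ with $s\geq 3$ has $q_1^{j'}$ as an out-neighbour forces $q_1^{j'}\in S$, which over many $j'$ exceeds $|S|\leq k-1$) to derive a contradiction, forcing $C^*\cap U\neq\emptyset$. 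The symmetric argument with in-degrees handles the path from $W$ to $v$.
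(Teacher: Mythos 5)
Your construction of $U'$ agrees with the paper's (the first $k+1$ vertices of each $Q_j$), but your construction of $U$ and the navigation argument are different from the paper's, and the navigation step has a genuine gap.

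First, a minor but real issue: your $U$ is built from $X$, the set of vertices of out-degree $<k$ in $T[U']$. The proposition only bounds $|X|<2k$; it does not guarantee $X\neq\emptyset$. If the inter-column edges make $T[U']$ a tournament of minimum out-degree $\geq k$ (perfectly possible when $\ell$ is large), then $X=\emptyset$, hence $U=\emptyset$, and the reachability conclusion is false. The paper avoids this by fixing, in each layer $T_i:=T[\{q_j^i:j\}]$, the $\min\{2k,\ell\}$ vertices of lowest out-degree \emph{in that layer}; this is never empty.

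Second, and more seriously, the ``navigation step'' is not correct as stated. From $q_s^{j'}\in C^*$ with $s\geq 3$ and terminality of $C^*$ you may only conclude $q_1^{j'}\in C^*\cup S$, not $q_1^{j'}\in S$. There is no reason $q_1^{j'}\notin C^*$: the vertex $q_1^{j'}$ has out-degree only $1$ within its column, but its inter-column edges can give it out-degree $\geq k$ in $T[U']$, so it need not lie in $X$, and since $U$ is a union of whole columns it need not lie in $U$ either. Consequently the ``over many $j'$ this exceeds $|S|$'' contradiction does not go through, and you have not ruled out a terminal strongly connected component of $T[U'\setminus S]$ that avoids $U$ entirely. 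The paper's argument is organised quite differently and avoids exactly this problem: it works layer by layer. Setting $T_i$ to be the $i$-th layer and $U_i$ its $\min\{2k,\ell\}$ lowest out-degree vertices, the paper observes that (a) inside any layer $T_i$ disjoint from $S$ one can reach $U_i$ in at most two steps (the standard median-type argument: if $d^+_{T_i}(u)\leq d^+_{T_i}(v)$ then either $v\to u$ or $v\to w\to u$ for some $w$), and (b) if the current layer meets $S$, then since $v\notin U$ we have $d^+_{T_i}(v)\geq k>|S|$, so $v$ beats some $q_j^i$ with $Q_j\cap S=\emptyset$, and using the backwards-transitive path inside $Q_j\cap U'$ one can hop to any layer $T_{i'}$ with $V(T_{i'})\cap S=\emptyset$ (such $i'$ exists as there are $k+1>|S|$ layers), reducing to (a). This two-coordinate bookkeeping (layers versus columns) is the ingredient your global terminal-SCC argument is missing, and it is what lets the proof control exactly where the $\leq k-1$ forbidden vertices can do damage.
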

\begin{proof}
We prove only the existence of $U,U'$; the existence of $W,W'$ follows by a symmetric argument. Let the backwards-transitive paths $T[Q_j]$ have vertices enumerated $q_j^1,\dots, q_j^{|Q_j|}$ such that there is an edge in $T[Q_j]$ from $q_j^a$ to $q_j^b$ if and only if either $b=a+1$ or $a\geq b+2$. For $i\in \{1,\dots, k+1\}$ let $T_i:=T[\{q_1^i,\dots, q_\ell^i\}]$. Thus $|T_i|=\ell$. Let $U_i\subseteq V(T_i)$ be a set of $\min\{2k,\ell\}$ vertices of lowest out-degree in $T_i$, let $U':=V(T_1)\cup \dots \cup V(T_{k+1})$, and let $U:=U_1\cup \dots \cup U_{k+1}$. Then clearly $|U|\leq 2k(k+1)$ and $|U'|= \ell (k+1)$. Now suppose $S\subseteq V(T)$ is of size at most $k-1$ and $v\in V(T)\backslash S$. We need to show that there exists a directed path (possibly of length $0$) in $T[(U'\cup \{v\})\backslash S]$ from $v$ to a vertex in $U$. We consider four cases:
\begin{enumerate}[(i)]
\item If $v\in U$ then we are clearly done.
\item If $v\in V(T_i)\backslash U$ for some $i\in \{1,\dots, k+1\}$ and $V(T_i)\cap S=\emptyset$, then let $u\in U\cap V(T_i)=U_i$. Since the vertices of each $U_i$ were picked to have minimal out-degree in $T_i$, we have that $d_{T_i}^+(u)\leq d_{T_i}^+(v)$, so there is an edge in $T$ from either $v$ or one of its out-neighbours in $T_i$ to $u$. So there is a directed path in $T_i$ of length at most two from $v$ to $u$ and we are done.
\item If $v\in V(T_i)\backslash U$ for some $i\in \{1,\dots, k+1\}$ and $V(T_i)\cap S\ne\emptyset$, then first note that since $v\in V(T_i)\backslash U$, it must be that $\ell=|T_i|>2k$. Note then that by Proposition \ref{technical proposition} and our choice of $U$ we have that $d_{T_i}^+(v)\geq k$. Hence, since $|S|\leq k-1$, there is at least one $j\in \{1,\dots ,\ell\}$ such that $q_j^i$ is an out-neighbour of $v$ and such that $Q_j\cap S=\emptyset$. Also since $|S|\leq k-1$, there is some $i'\in \{1,\dots, k+1\}$ such that $V(T_{i'})\cap S=\emptyset$. Since $T[Q_j]$ is a backwards-transitive path, there is a directed path in $T[Q_j\cap U']$ from $q_j^i$ to $q_j^{i'}$, and by {\rm (i)},~{\rm (ii)} there is a directed path (possibly of length $0$) in $T_{i'}$ from $q_j^{i'}$ to a vertex in $U$. So piecing these paths together gives us a directed path $P$ in $T[U'\backslash S]$ from $v$ to $U$ as required. (Indeed, note that $P$ avoids $S$ since both $Q_j$ and $T_{i'}$ avoid $S$.)
\item If $v\in V(T)\backslash U'$ then note that $v= q_j^i$ for some $j\in \{1,\dots ,\ell\}$ and some $i>k+1$. Now since $T[Q_j]$ is a backwards-transitive path, there are edges in $T$ directed from $v$ to each of the vertices $q_j^1,\dots, q_j^k$. Since $|S|\leq k-1$, there is some $i\in \{1,\dots, k\}$ such that $q_j^i\notin S$. By {\rm (i)--(iii)} there is a directed path in $T[U'\backslash S]$ from $q_j^i$ to a vertex in $U$. So this path together with the edge directed from $v$ to $q_j^i$ is the directed path required.
\end{enumerate}
This covers all cases and we are done.
\end{proof}

\section{Proof of Theorem~\ref{main theorem}}\label{main section}

The purpose of this section is to prove Theorem~\ref{main theorem}.
Very briefly, the proof strategy is as follows:
suppose for simplicity that $k=t=m=2$.
We aim to construct small disjoint out-dominating sets $A_1,\dots,A_4$ (i.e. for every vertex $v \in V(T)$ there is an edge from each $A_i$ to $v$)
so that each $A_i$ induces a transitive subtournament of $T$. Similarly, we aim to construct small disjoint in-dominating sets $B_i$.
Then for each $i$ we find a short path $P_i$ joining the sink of $B_i$ to the source of $A_i$, using the 
assumption of high connectivity.
Let $V_1:=D_1 \cup D_2$ and $V_2:=D_3 \cup D_4$, where $D_i:=A_i \cup V(P_i) \cup B_i$ for $i=1,\dots,4$.

Now it is easy to check that Theorem~\ref{main theorem}(ii) holds: 
consider $R$ as in (ii) and delete an arbitrary vertex $s$ from $V_1 \cup R$ to obtain a set $W$.
To prove (ii) we have to show that for any $x,y \in W$ there is a path from $x$ to $y$ in $T[W]$.
To see this note that, without loss of generality, $W$ still contains all of $D_1$ (otherwise we consider $D_2$ instead).
Since $B_1$ is in-dominating, there is an edge from $x$ to some $b \in B_1$.
Similarly, there is an edge from some $a \in A_1$ to $y$.
Since $A_1$ and $B_1$ induce transitive tournaments, we can now find a path from $b$ to $a$ in $T[D_1]$
by utilizing~$P_1$ (see Claim~1).

The main problem with this approach is that one cannot quite achieve the above domination property:
for every $A_i$ there is a small exceptional set which is not out-dominated by $A_i$
(and similarly for $B_i$).
We overcome this obstacle by using the notion of `safe' vertices introduced before Claim~2.
With this notion, we can still find a short path from an exceptional vertex $x$ to $B_i$ (rather than a single edge).

\removelastskip\penalty55\medskip\noindent{\bf Proof of Theorem~\ref{main theorem}.}
Let $x_1,\dots, x_{kt}$ be $kt$ vertices of lowest in-degree in $T$. Let $y_1,\dots, y_{kt}$ be $kt$ vertices in $V(T)\backslash \{x_1,\dots, x_{kt}\}$ whose out-degree in $T$ is as small as possible. Define
$$\hat{\delta}^-(T):= \min\limits_{v\in V(T)\backslash \{x_1,\dots, x_{kt}\}} d_T^-(v) \hspace{1cm} \textrm{and}\hspace{1cm} \hat{\delta}^+(T):= \min\limits_{v\in V(T)\backslash \{y_1,\dots, y_{kt}\}} d_T^+(v).$$
Let $c:= \left\lceil \log \left( 32k^2tm \right) \right\rceil$. We may repeatedly apply Lemmas \ref{out-dominating sets} and \ref{in-dominating sets} with parameter $c$ (removing the dominating sets each time) to obtain disjoint sets of vertices $A_1,\dots, A_{kt}, B_1,\dots, B_{kt}$ and sets of vertices $E_{A_1},\dots, E_{A_{kt}}, E_{B_1},\dots, E_{B_{kt}}$ satisfying the following properties for all $i\in \{1,\dots, kt\}$, where we write $D:=\bigcup_{i=1}^{kt}(A_i\cup B_i)$.
\begin{enumerate}[(i)]
\item $1\leq |A_i|\leq c$ and $T[A_i]$ is a transitive tournament with sink $x_i$,
\item $1\leq |B_i|\leq c$ and $T[B_i]$ is a transitive tournament with source $y_i$,
\item either $E_{A_i}=\emptyset$ or $E_{A_i}$ is contained in the common in-neighbourhood of all vertices in $A_i$,
\item either $E_{B_i}=\emptyset$ or $E_{B_i}$ is contained in the common out-neighbourhood of all vertices in $B_i$,
\item $T[A_i]$ out-dominates $V(T)\backslash (D\cup E_{A_i})$,
\item $T[B_i]$ in-dominates $V(T)\backslash (D\cup E_{B_i})$,
\item $|E_{A_i}|\leq (1/2)^{c-1}\hat{\delta}^-(T)$,
\item $|E_{B_i}|\leq (1/2)^{c-1}\hat{\delta}^+(T)$.
\end{enumerate}

For $j\in \{1,\dots, t\}$ define $j^*:=\{(j-1)k+1,\dots, (j-1)k+k\}$, define $A^*_j:= \bigcup_{i\in j^*}A_{i}$, and similarly define $B^*_j:= \bigcup_{i\in j^*}B_{i}$. Define $E_A:=E_{A_1}\cup \dots \cup E_{A_{kt}}$ and $E_B:=E_{B_1}\cup \dots \cup E_{B_{kt}}$. Finally define $E:= E_A\cup E_B$. Note that
\begin{equation}\label{E_A size}
|E_A|\leq kt \left( \frac{1}{2}\right)^{c-1}\hat{\delta}^-(T)\leq \frac{1}{16km} \hat{\delta}^-(T),
\end{equation}
by our choice of $c$. Similarly, $|E_B|\leq \hat{\delta}^+(T)/(16km)$.

For the remainder of the proof we will assume that $|E_A|\leq |E_B|$. The case $|E_A|>|E_B|$ follows by a symmetric argument. Note then that
\begin{equation}\label{E size}
|E|\leq |E_A|+|E_B|\leq 2|E_B|\leq \hat{\delta}^+(T)/(8km).
\end{equation}

Our aim is to use the dominating sets $A_i, B_i$ to construct the sets $V_i$ required. Roughly speaking, for each $i\in \{1,\dots, kt\}$ our aim is to use the high connectivity of $T$ in order to find vertex-disjoint paths $P_i$ in $T-D$ directed from the sink of $B_i$ to the source of $A_i$. We will then form disjoint vertex sets $V_1,\dots, V_t$ with
\begin{equation}\label{parts consist of}
A^*_j\cup B^*_j\cup \bigcup_{i\in j^*}V(P_{i})\subseteq V_j.
\end{equation}

\vspace{0.3cm}
\noindent {\bf Claim 1:} \textit{Suppose that} $j\in \{1,\dots, t\}$ \textit{and that} $V_j\subset V(T)$ \textit{satisfies} (\ref{parts consist of}). \textit{Then for any pair of vertices} $x\in V(T)\backslash (D\cup E_B)$ \textit{and} $y\in V(T)\backslash (D\cup E_A)$, \textit{the ordered pair} $(x, y)$ \textit{is} $k$-\textit{connected in} $T[V_j\cup \{x,y\}]$.

\noindent Indeed, if we delete an arbitrary set $S\subset V_j\backslash \{x, y\}$ of at most $k-1$ vertices then there is some $i\in j^*$ such that $S\cap (A_i\cup B_i\cup V(P_i))=\emptyset$. So there is an edge from $x$ to some vertex $b\in B_i$ (since $B_i$ is in-dominating and $x\notin D\cup E_{B_i}$) and an edge from $b$ to the sink of $B_i$ (if $b$ is not the sink of $B_i$); and similarly there is an edge from some vertex $a\in A_i$ to $y$ and an edge from the source of $A_i$ to $a$ (if $a$ is not the source of $A_i$). Then these at most four edges together with $P_i$ form a directed walk from $x$ to $y$ in $T[(V_j\backslash S)\cup \{x,y\}]$, which we can shorten if necessary to find a directed path from $x$ to $y$ in $T[(V_j\backslash S)\cup \{x,y\}]$, as required.
\vspace{0.3cm}

Claim $1$ is a step towards constructing sets $V_j$ as required in Theorem~\ref{main theorem}. However note that this construction so far ignores the problem of finding paths to or from the (relatively few) vertices in $D\cup E$ (in order to satisfy Theorem~\ref{main theorem}{\rm (ii)}), and the problem of controlling the sizes of the vertex sets $V_1,\dots, V_t$ (in order to satisfy Theorem~\ref{main theorem}{\rm (i)}). To address the former problem we will introduce the notion of `safe' vertices and will construct the sets $V_1,\dots, V_t$ (which will eventually satisfy (\ref{parts consist of})) in several steps.

We will colour some vertices of $V(T)$ with colours in $\{1,\dots, t\}$, and at each step $V_j$ will consist of all vertices of colour $j$. At each step we will call a vertex $v$ in $V_j$ \textit{forwards-safe} if for any set $S\not\ni v$ of at most $k-1$ vertices, there is a directed path (possibly of length $0$) in $T[V_j\backslash S]$ from $v$ to $V_j\backslash (D\cup E_B\cup S)$. Similarly we will call a vertex $v$ in $V_j$ \textit{backwards-safe} if for any set $S\not\ni v$ of at most $k-1$ vertices, there is a directed path (possibly of length $0$) in $T[V_j\backslash S]$ to $v$ from $V_j\backslash (D\cup E_A\cup S)$. We call a vertex \textit{safe} if it is both forwards-safe and backwards-safe. We also call any vertex in $V(T)\backslash (V'\cup E)$ safe, where $V':= \bigcup_{j=1}^t V_j$. Note that the following properties are satisfied at every step:
\begin{itemize}
\item all vertices outside $D\cup E$ are safe,
\item all vertices in $V'\setminus (D\cup E_B)$ are forwards-safe and all vertices in $V'\setminus (D\cup E_A)$ are backwards-safe,
\item if $v\in V_j$ has at least $k$ forwards-safe out-neighbours in $V_j$ then $v$ itself is forwards-safe; the analogue holds if $v$ has at least $k$ backwards-safe in-neighbours in $V_j$,
\item if $v\in V_j$ is safe and in the next step we enlarge $V_j$ by colouring some more (previously uncoloured) vertices with colour $j$ then $v$ is still safe.
\end{itemize}
Our aim is to first colour the vertices in $D$ as well as some additional vertices in such a way as to make all coloured vertices safe (see Claim~$3$). We will then choose the paths $P_i$ and colour the vertices on these paths, as well as some additional vertices, in such a way as to make all coloured vertices safe (see Claim~$4$). Finally we will colour all those vertices in $E$ which are not coloured yet, as well as some additional vertices, in such a way as to make all coloured vertices safe (see Claim~$5$). The sets $V_1,\dots, V_t$ thus obtained will satisfy (\ref{parts consist of}) and all vertices of $T$ will be safe. So the next claim will then imply that the sets $V_1,\dots, V_t$ satisfy Theorem~\ref{main theorem}{\rm (ii)}. In order to ensure that Theorem~\ref{main theorem}{\rm (i)} holds as well, we will ensure that in each step we do not colour too many vertices.

\vspace{0.3cm}
\noindent {\bf Claim 2:} \textit{Suppose that} $V_1,\dots, V_t$ \textit{satisfy} (\ref{parts consist of}) \textit{and that} $j\in \{1,\dots, t\}$. \textit{Then for any pair of vertices} $x, y\in V_j\cup (V(T)\backslash V')$ \textit{that are both safe, the ordered pair} $(x, y)$ \textit{is} $k$-\textit{connected in} $T[V_j\cup \{x,y\}]$.

\noindent This is immediate from the definitions and Claim $1$.
\vspace{0.3cm}

So our goal is to modify our construction so as to ensure that $V_1,\dots, V_t$ satisfy (\ref{parts consist of}) and that every vertex in $V(T)$ is safe. We start with no vertices of $T$ coloured, and we now begin to colour them. We first colour the vertices in $D=\bigcup_{j=1}^t (A_j^*\cup B_j^*)$ by giving every vertex in $A^*_j\cup B^*_j$ colour $j$. We now wish to ensure that every vertex in $D$ is safe.

\vspace{0.3cm}
\noindent {\bf Claim 3:} \textit{We can colour some additional vertices of} $T$ \textit{in such a way that every coloured vertex is safe, and at most}
\begin{equation}\label{number of coloured vxs 1}
(k+1)^2(2ktc+4k^2t)
\end{equation}
\textit{vertices are coloured in total.}

\noindent To prove Claim~3 first note that, since $T$ is by assumption strongly $10^7k^6t^2m\log (ktm)$-connected, it certainly holds that
\begin{equation}\label{minimum semidegree}
\delta^0(T)\geq 10^7k^6t^2m\log (ktm).
\end{equation}
Hence
\begin{equation}\label{referee equation E_A}
\hat{\delta}^-(T)-|E_A| \stackrel{(\ref{E_A size})}{\ge } \hat{\delta}^-(T)/2\ge \delta^0(T)/2 \stackrel{(\ref{minimum semidegree})}{\ge } 10^6k^6t^2m\log (ktm),
\end{equation}
and similarly
\begin{equation}\label{referee equation E}
\hat{\delta}^+(T)-|E| \stackrel{(\ref{E size})}{\ge } \hat{\delta}^+(T)/2\ge \delta^0(T)/2 \stackrel{(\ref{minimum semidegree})}{\ge } 10^6k^6t^2m\log (ktm).
\end{equation}
Since $|D|\leq 2ktc$, (\ref{minimum semidegree}) implies that for each $v\in \{x_1,\dots, x_{kt}, y_1,\dots, y_{kt}\}$ in turn we may greedily choose $k$ uncoloured in-neighbours and $k$ uncoloured out-neighbours, all distinct from each other, and colour them the same colour as $v$. Now the number of coloured vertices is at most $2ktc+4k^2t$. So we may greedily choose, for each coloured vertex $v$ not in $\{x_1,\dots, x_{kt}, y_1,\dots, y_{kt}\}$ in turn, $k$ distinct uncoloured in-neighbours not in $E_A$, and colour them the same colour as $v$. Indeed, this is possible since by (\ref{referee equation E_A}) the number of in-neighbours of $v$ outside $E_A$ is at least $(k+1)(2ktc+4k^2t)$. Now the number of coloured vertices is at most $(k+1)(2ktc+4k^2t)$, so by (\ref{referee equation E}) we may greedily choose, for each coloured vertex $v$ not in $\{x_1,\dots, x_{kt}, y_1,\dots, y_{kt}\}$ in turn, $k$ distinct uncoloured out-neighbours not in $E$, and colour them the same colour as $v$. Note that the number of coloured vertices is now at most $(k+1)^2(2ktc+4k^2t)$ and that every coloured vertex is safe, by construction.
\vspace{0.3cm}

We now wish to find the paths $P_i$ discussed earlier and colour the vertices on these paths appropriately. For $i\in \{1,\dots, kt\}$ we define an $i$-\textit{path} to be a directed path from the sink of $B_i$ to the source of $A_i$.

\vspace{0.3cm}
\noindent {\bf Claim 4:} \textit{For every} $j\in \{1,\dots, t\}$ \textit{and every} $i\in j^*$ \textit{there exists an} $i$-\textit{path} $P_i$ \textit{in} $T$ \textit{with previously uncoloured internal vertices, such that all such paths are vertex-disjoint from each other. Moreover we can colour the internal vertices of} $P_i$ \textit{with colour} $j$ \textit{as well as colouring some additional (previously uncoloured) vertices of} $T$ \textit{in such a way that every coloured vertex is safe, and at most}
\begin{equation}\label{number of coloured vxs 3}
67k^4t^2\log m+n/(2m)
\end{equation}
\textit{vertices are coloured in total.}

\noindent We will prove Claim~4 in a series of subclaims. The paths $P_i$ that we construct for Claim~4 will be either `short' or `long'; we deal with these two cases separately. Firstly, for every $j\in \{1,\dots, t\}$ and every $i\in j^*$ in turn we choose, if possible, an $i$-path of length at most $k+1$ with uncoloured internal vertices, vertex-disjoint from all previously chosen paths. For each $i\in \{1,\dots, kt\}$ for which we find such a path, let $P_i$ be that path. Let $\mathcal{P}_{short}$ be the set of paths $P_i$ of length at most $k+1$ found in this way, let $\mathcal{I}_{short}:=\{i\in \{1,\dots, kt\}: \mathcal{P}_{short}\hspace{0.18cm} \textrm{contains an {\emph i}-path} \}$, and let $\mathcal{I}_{long}:= \{1,\dots, kt\}\backslash \mathcal{I}_{short}$. We colour the internal vertices of each $i$-path in $\mathcal{P}_{short}$ with colour~$j$ (where $j$ is such that $i\in j^*$). Note that since some of these vertices may be in $E$, it is important that we ensure that they are safe.

\vspace{0.3cm}
\noindent {\bf Claim 4.1:} \textit{We may colour some (previously uncoloured) vertices of} $T$ \textit{in such a way that all coloured vertices are safe, and at most}
\begin{equation}\label{number of coloured vxs 2}
54k^4t^2\log m
\end{equation}
\textit{vertices are coloured in total. In particular we can ensure that the internal vertices of all paths in} $\mathcal{P}_{short}$ \textit{are safe.}

\noindent We do this (similarly to before) as follows. By (\ref{number of coloured vxs 1}) the number of coloured vertices after colouring the short paths is at most $(k+1)^2(2ktc+4k^2t)+k^2t$, so by (\ref{referee equation E_A}) we may greedily choose, for every path in $\mathcal{P}_{short}$ and every internal vertex $v$ on that path in turn, $k$ distinct uncoloured in-neighbours not in $E_A$, and colour them the same colour as $v$. (Note that $v\notin \{x_1,\dots, x_t, y_1,\dots, y_t\}$ since all the paths in $\mathcal{P}_{short}$ had uncoloured internal vertices when we chose them.) Now the number of coloured vertices is at most $(k+1)^2(2ktc+4k^2t)+(k+1)k^2t$, so by (\ref{referee equation E}) we may greedily choose, for every path in $\mathcal{P}_{short}$ and every internal vertex $v$ on that path, as well as the $k$ in-neighbours of $v$ just chosen, in turn, $k$ distinct uncoloured out-neighbours not in $E$, and colour them the same colour as $v$. Note that the number of coloured vertices is now at most\COMMENT{Since $k\geq 1$ and $m\geq t\geq 2$:
\begin{align*}
&(k+1)^2(2ktc+4k^2t)+(k+1)^2k^2t\leq (2k)^2(2kt (\log (32k^2tm)+1)+4k^2t)+(2k)^2k^2t\\
= &8k^3t(\log 32+2\log k +\log t+ \log m)+ 8k^3t+ 16k^4t+4k^4t\\
\leq &40k^3t+16k^4t+8k^3t^2+8k^3t\log m + 8k^3t+ 16k^4t+4k^4t\\
\leq &20k^3t^2+8k^4t^2+8k^3t^2+4k^3t^2\log m +4k^3t^2+8k^4t^2+2k^4t^2\leq 54k^4t^2\log m.
\end{align*}
}
\begin{equation*}
(k+1)^2(2ktc+4k^2t)+(k+1)^2k^2t\leq 54k^4t^2\log m
\end{equation*}
and that every coloured vertex is safe, by construction.
\vspace{0.3cm}

\noindent Now we must find $i$-paths $P_i$ for all $i\in \mathcal{I}_{long}$; note that they will all be of length at least $k+2$. Initially, for every $j\in \{1,\dots, t\}$ and every $i\in j^*\cap \mathcal{I}_{long}$ we will in fact seek $13k^4t$ distinct internally vertex-disjoint $i$-paths with uncoloured internal vertices, such that for every $i'\in \mathcal{I}_{long}\backslash \{i\}$, all $i$-paths are vertex-disjoint from all $i'$-paths. We seek so many such paths because complications later in the proof may require us to colour some vertices in some of the $i$-paths with $i\in j^*\cap \mathcal{I}_{long}$ a colour other than $j$, so some spare paths are necessary. It is also important that we control the sizes of these paths so that we are able to control the sizes of the vertex sets $V_1,\dots, V_t$.

\vspace{0.3cm}
\noindent {\bf Claim 4.2:} \textit{For every} $i\in \mathcal{I}_{long}$ \textit{we can find a set} $\mathcal{P}_{i,long}$ \textit{of} $13k^4t$ \textit{distinct internally vertex-disjoint} $i$-\textit{paths with uncoloured internal vertices, such that for every} $i'\in \mathcal{I}_{long}\backslash \{i\}$, \textit{all paths in} $\mathcal{P}_{i,long}$ \textit{are vertex-disjoint from all paths in} $\mathcal{P}_{i',long}$. \textit{Moreover, we may choose the sets} $\mathcal{P}_{i,long}$ \textit{such that the total number of internal vertices on the paths in} $\bigcup_{i\in \mathcal{I}_{long}}\mathcal{P}_{i,long}$ \textit{is at most} $n/(2m)$.

\noindent Indeed, consider the tournament $T'$ induced on $T$ by the uncoloured vertices as well as the sinks of $B_i$ and the sources of $A_i$, for every $i\in \mathcal{I}_{long}$. By assumption $T$ is strongly $10^7k^6t^2m\log (ktm)$-connected, so by (\ref{number of coloured vxs 2}) $T'$ is\COMMENT{Since $k\geq 1$ and $m\geq t\geq 2$:
\begin{align*}
&2.6\times 10^5k^5t^2m\log (26k^5t^2m)= 2.6\times 10^5k^5t^2m(\log 26+5\log k+2\log t+\log m)\\
\leq &2.6\times 10^5k^5t^2m(5+5\log ktm)\leq 2.6\times 10^5k^5t^2m\log (ktm)(5+5)\\
\leq &10^7k^6t^2m\log (ktm)-54k^4t^2\log m.
\end{align*}
} certainly strongly $2.6\times 10^5k^5t^2m\log (26k^5t^2m)$-connected. So by Theorem \ref{connectivity implies linkedness} $T'$ is $26k^5t^2m$-linked. So since $|\mathcal{I}_{long}|\leq kt$, Proposition \ref{linkedness implies short paths} implies that we may find, for each $i\in \mathcal{I}_{long}$, the $13k^4t$ $i$-paths required, and we may do so in such a way that the total number of internal vertices on these paths is at most
$|V(T')|/(2m)\leq n/(2m)$, as required.
\vspace{0.3cm}

For each $i\in \mathcal{I}_{long}$, we obtain from each of the paths in $\mathcal{P}_{i,long}$ a possibly shorter path by deleting from the path any vertex $v$ such that there is an edge in $T$ directed from an ancestor of $v$ in the path to a descendant of $v$ in the path. We replace each of the paths in $\mathcal{P}_{i,long}$ by the corresponding shorter path obtained. Note that this ensures that each of the paths in $\mathcal{P}_{i,long}$ is now a backwards-transitive path of length at least $k+2$. As before, it is important that we now ensure that the internal vertices on these paths are coloured in such a way as to be safe, while also colouring them in accordance with the requirements of Claim $4$; we do this as follows.

\vspace{0.3cm}
\noindent {\bf Claim 4.3:} \textit{For every} $j\in \{1,\dots, t\}$ \textit{and every} $i\in j^*\cap \mathcal{I}_{long}$ \textit{we may colour the internal vertices of all paths in} $\mathcal{P}_{i,long}$ \textit{as well as some additional (previously uncoloured) vertices of} $T$ \textit{in such a way that every coloured vertex is safe and at least one path $P_i$ in $\mathcal{P}_{i,long}$ has all vertices coloured with colour $j$. Moreover, we can do this so that at most}
\begin{equation}
67k^4t^2\log m+n/(2m)
\end{equation}
\textit{vertices are coloured in total.}

\noindent Indeed, for each $j\in \{1,\dots, t\}$ consider the tournament induced on $T$ by the set of all interior vertices of all paths in $\mathcal{P}_{i,long}$ for all $i\in j^*\cap \mathcal{I}_{long}$. Note that this tournament satisfies the assumptions of Lemma \ref{technical lemma} (with $13k^4t\cdot |j^*\cap \mathcal{I}_{long}|$ playing the role of $\ell$) since each of the paths in each of the sets $\mathcal{P}_{i,long}$ is a backwards-transitive path of length at least $k+2$. So consider the sets $U$, $W$ each of size at most $2k(k+1)$ and the sets $U'$, $W'$ each of size at most $13k^5t(k+1)$ given by Lemma \ref{technical lemma}. Let us call them $U_j, W_j, U_j', W_j'$ respectively. By the properties of $U_j, W_j, U_j', W_j'$ and the definitions of forwards-safe and backwards-safe, it is clear that if every vertex in $U_j'$ is coloured $j$ and every vertex in $U_j$ is forwards-safe, and every vertex in $W_j'$ is coloured $j$ and every vertex in $W_j$ is backwards-safe, then for all $i\in j^*\cap \mathcal{I}_{long}$ every vertex on paths in $\mathcal{P}_{i,long}$ that is coloured $j$ will be safe. So for each $j\in \{1,\dots, t\}$ we colour all vertices in $U_j'\cup W_j'$ with colour $j$, and we now aim to make every vertex in $U_j$ forwards-safe and every vertex in $W_j$ backwards-safe; we accomplish this (similarly to the way we have made vertices safe before) as follows. By (\ref{number of coloured vxs 2}) the number of coloured vertices is at most $54k^4t^2\log m+26k^5t^2(k+1)$, so by (\ref{referee equation E_A}) we may greedily choose, for every $j\in \{1,\dots, t\}$ and for each vertex in $W_j$ in turn, $k$ distinct uncoloured in-neighbours not in $E_A$, and colour them $j$. Now, the number of coloured vertices is at most $54k^4t^2\log m+26k^5t^2(k+1)+2k^2(k+1)t$, so by (\ref{referee equation E}) we may greedily choose, for every $j\in \{1,\dots, t\}$ and for each vertex in $U_j$ and each of the $k$ in-neighbours of each of the vertices in $W_j$ just chosen in turn, $k$ distinct uncoloured out-neighbours not in $E$, and colour them $j$. Let $Z$ be the set of all those vertices that we have just coloured to make all vertices in each $U_j$ forwards-safe and all vertices in each $W_j$ backwards-safe. Note that $|Z|\leq 2k^2(k+1)t+k(2k(k+1)t+2k^2(k+1)t)< 13k^4t$.

Note also that some of the vertices in $Z$ may be contained in some of the paths in $\mathcal{P}_{i,long}$ for some $i\in \mathcal{I}_{long}$; this is the reason for which we found spare paths. For each $i\in \mathcal{I}_{long}$, since $|\mathcal{P}_{i,long}|=13k^4t$, there is at least one path in $\mathcal{P}_{i,long}$ that contains no vertices in $Z$; let $P_i$ be one such path. Colour any uncoloured vertices remaining in paths in the sets $\mathcal{P}_{i,long}$ with colour $j$, where $j$ is such that $i\in j^*$. In particular the vertices of $P_i$ all have colour $j$. So we have now found our paths $P_i$ for all $i\in \mathcal{I}_{long}$, and every coloured vertex is safe by construction. Also note that the number of coloured vertices is now at most
\begin{equation*}
54k^4t^2\log m+13k^4t+n/(2m)\leq 67k^4t^2\log m+n/(2m),
\end{equation*}
as required for Claim~4.3.
\vspace{0.3cm}

\noindent This completes the proof of Claim $4$.
\vspace{0.3cm}

Now that we have built all of the structure required, it remains for us to colour the uncoloured vertices in $E$ in such a way as to ensure that they are safe. This is essential as, recalling the definition, uncoloured vertices in $E$ are not safe.

\vspace{0.3cm}
\noindent {\bf Claim 5:} \textit{We can colour the uncoloured vertices in} $E$ \textit{as well as some additional (previously uncoloured) vertices of} $T$ \textit{in such a way that every coloured vertex is safe, and at most} $n/m$ \textit{vertices are coloured in total.}

\noindent In order to prove Claim $5$ we colour all the uncoloured vertices $v\in E$ by distinguishing three cases. We first colour all uncoloured vertices $v\in E$ which satisfy the assumptions of Case $1$, then we colour all uncoloured vertices $v\in E$ which satisfy the assumptions of Case $2$, and then we colour all uncoloured vertices $v\in E$ which satisfy the assumptions of Case $3$.
\begin{enumerate}[{\bf {Case} 1:}]
\item \textit{There exist (not necessarily distinct)} $j_1, j_2\in \{1,\dots, t\}$ \textit{such that} $|\{i\in j_1^*:v\in E_{A_i} \}|\leq|\{i\in j_1^*:v\in E_{B_i} \}|$ \textit{and} $|\{i\in j_2^*:v\in E_{A_i} \}|\geq|\{i\in j_2^*:v\in E_{B_i} \}|$.

\noindent Note that by (\ref{E size}) it certainly holds that $|E|\leq n/(8km)$. So by (\ref{number of coloured vxs 3}) the number of uncoloured vertices not in $E$ is at least\COMMENT{Since $n\geq 10^7k^6t^2m\log ktm$ by the connectivity assumption, we have that $67k^4t^2\log m\leq n/8m.$ So $n/(8km)+67k^4t^2\log m\leq n((1/2)/2m),$ which implies the inequality.
}
\begin{equation}\label{uncoloured vertices}
n\left( 1-\frac{1}{2m}-\frac{1}{8km}\right)-67k^4t^2\log m\geq n-\frac{3n}{4m}.
\end{equation}
Either there are $k$ such vertices that are all out-neighbours of $v$, or there are not, in which case there must be $k$ such vertices that are all in-neighbours of $v$.

\begin{myindentpar}{0.4cm}
{\bf Case 1.1:} If $v$ has $k$ uncoloured out-neighbours not in $E$, we colour them and $v$ with colour~$j_1$. This ensures that $v$ is forwards-safe. To see that $v$ is backwards-safe too, note that if $v\notin E_{A_i}$ then there is an edge in $T$ directed to $v$ from a (safe) vertex in $A_i$, but similarly that if $v\in E_{B_i}$ then there is an edge in $T$ directed to $v$ from a (safe) vertex in $B_i$. Together with our assumption that
$|\{i\in j_1^*:v\in E_{A_i} \}|\leq|\{i\in j_1^*:v\in E_{B_i} \}|$ this ensures that $v$ has $k$ safe in-neighbours of its colour. So $v$ is backwards-safe.
\end{myindentpar}

\begin{myindentpar}{0.4cm}
{\bf Case 1.2:} If $v$ does not have $k$ uncoloured out-neighbours outside $E$ then $v$ must have $k$ uncoloured in-neighbours not in $E$; we colour them and $v$ with colour $j_2$. This ensures that $v$ is backwards-safe. To see that $v$ is forwards-safe too, note that if $v\notin E_{B_i}$ then there is an edge in $T$ directed from $v$ to a (safe) vertex in $B_i$, but similarly that if $v\in E_{A_i}$ then there is an edge in $T$ directed from $v$ to a (safe) vertex in $A_i$.
Together with our assumption that $|\{i\in j_2^*:v\in E_{A_i} \}|\geq|\{i\in j_2^*:v\in E_{B_i} \}|$ this ensures that $v$ has $k$ safe out-neighbours of its colour. So $v$ is forwards-safe.
\end{myindentpar}

\noindent By (\ref{uncoloured vertices}) we can repeat this process greedily for all vertices $v\in E$ which satisfy the assumptions of Case $1$. Note that after this step all coloured vertices are safe.
\item \textit{For all} $j\in \{1,\dots, t\}$ \textit{it holds that} $|\{i\in j^*:v\in E_{A_i} \}|<|\{i\in j^*:v\in E_{B_i} \}|$.

\noindent We consider two sub-cases:

\begin{myindentpar}{0.4cm}
{\bf Case 2.1:} If $v$ has $k$ uncoloured out-neighbours not in $E$ then colour them and $v$ with colour~$1$.
\end{myindentpar}

\begin{myindentpar}{0.4cm}
{\bf Case 2.2:} Otherwise, since (\ref{referee equation E}) implies that $\hat{\delta}^+(T)\geq kt+k+|E|$, an averaging argument shows that there is some $j\in \{1,\dots, t\}$ such that $v$ has $k$ out-neighbours of colour $j$ (recall that all currently coloured vertices are safe), in which case we colour $v$ with colour~$j$.
\end{myindentpar}

\noindent In either case it is clear that $v$ is now forwards-safe. A similar argument as in Case~$1.1$ shows that $v$ is backwards-safe too.
\item \textit{For all} $j\in \{1,\dots, t\}$ \textit{it holds that} $|\{i\in j^*:v\in E_{A_i} \}|>|\{i\in j^*:v\in E_{B_i} \}|$.

\noindent We consider two sub-cases:

\begin{myindentpar}{0.4cm}
{\bf Case 3.1:} If $v$ has $k$ uncoloured in-neighbours not in $E_A$ then colour them and $v$ with colour~$1$. (Note that none of these in-neighbours $w$ can lie in $E_B$. Indeed, if $w\in E_B$ then $w$ satisfies the assumptions of one of the first two cases (as $w\notin E_A$ implies $|\{i\in j^*:v\in E_{A_i}\}|=0$) and so $w$ would have already been coloured.)
\end{myindentpar}

\begin{myindentpar}{0.4cm}
{\bf Case 3.2:} Otherwise, since (\ref{referee equation E_A}) implies that $\hat{\delta}^-(T)\geq kt+k+|E_A|$, an averaging argument shows that there is some $j\in \{1,\dots, t\}$ such that $v$ has $k$ in-neighbours of colour $j$ (recall that all currently coloured vertices are safe), in which case we colour $v$ with colour~$j$.
\end{myindentpar}

\noindent In either case it is clear that $v$ is now backwards-safe. Again, a similar argument as in Case~$1.2$ shows that $v$ is forwards-safe too.
\end{enumerate}

\noindent This covers all cases, so we have now coloured all vertices in $E$ in such a way that all coloured vertices are safe. Note that for each of the at most $|E|\leq n/(8mk)$ vertices in $E$ that were uncoloured at the start of the proof of Claim $5$ we have coloured at most $k$ (previously uncoloured) vertices not in $E$ in this step. So by (\ref{uncoloured vertices}) the total number of coloured vertices is at most\COMMENT{Note that $3n/(4m)+k|E|$ is sufficient in the following inequality, but the $(k+1)|E|$ term is given for clarity} $3n/(4m)+(k+1)|E|\le n/m$, as required.
\vspace{0.3cm}

Now the only uncoloured vertices remaining are not in $E$ and so they are safe. So all vertices in $T$ are now safe. This completes the construction of the vertex sets required, where the colour classes of colours $1,\dots, t$ correspond to the vertex sets $V_1,\dots, V_t$ respectively. Since the number of coloured vertices is at most $n/m$, the size of each $V_j$ is certainly at most $n/m$. And since we have ensured that every vertex in $T$ is safe, Claim~$2$ implies that the $V_j$ satisfy the requirements of Theorem~\ref{main theorem}.
\endproof

\section{Partitioning tournaments into vertex-disjoint cycles}\label{corollary section}

The purpose of this section is to derive Theorem~\ref{main corollary} from Theorem~\ref{main theorem}.

\removelastskip\penalty55\medskip\noindent{\bf Proof of Theorem~\ref{main corollary}.}
Note that by averaging there is at least one value $j\in \{1,\dots, t\}$ for which $L_j\geq n/t$. Without loss of generality let $L_1\geq n/t$. Let $\tilde{J}:=\{j\in \{1,\dots, t\}: L_j<n/(2t^2)\}$. For $j\in \tilde{J}$ let $L_j':= \left\lceil n/t^2 \right\rceil$. For $j\in \{2,\dots, t\} \backslash \tilde{J}$ let $L_j':= L_j$. Let $L_1':=L_1-\sum_{j=2}^t (L_j'-L_j)$. Note that $L_1'\geq n/t^2$ and that $\sum_{j=1}^t L_j'=n$.

Since\COMMENT{Since $t\geq 2$:
\begin{align*}
&10^72^6t^2(2t^2)\log (2t(2t^2))=10^7\times 128\times t^4(\log 4 +3\log t)\\
\leq &10^7\times 128\times t^4\times 5\log t\leq 10^{10}t^4\log t.
\end{align*}
} $10^{10}t^4\log t\geq 10^72^6t^2(2t^2)\log (2t(2t^2))$, we have by Theorem \ref{main theorem} that $V(T)$ contains $t$ disjoint sets of vertices, $V_1,\dots, V_t$, such that for every $j\in \{1,\dots, t\}$ the following hold:

\begin{enumerate}[{\rm (i)}]
\item $|V_j|\leq n/(2t^2)$,
\item for any set $R\subseteq V(T)\backslash \bigcup_{i=1}^t V_i$ the subtournament $T[V_j\cup R]$ is strongly $2$-connected.
\end{enumerate}
Construct a partition $V_1',\dots ,V_t'$ of the vertices of $T$, such that for every $j\in \{1,\dots, t\}$ it holds that $V_j\subseteq V_j'$ and that $|V_j'|=L_j'$. This is possible, since for every $j\in \{1,\dots, t\}$ we have $L_j'\geq n/(2t^2)\geq|V_j|$. Note that, for every $j\in \{1,\dots, t\}$, $T[V_j']$ is strongly $2$-connected.

Now, since $n/t^2> 7$, we have by Theorem \ref{Song theorem} that for each $j\in \tilde{J}$, $T[V_j']$ contains two vertex-disjoint cycles of lengths $L_j$ and $L_j'-L_j$. The cycle of length $L_j$ we call $C_j$ and the cycle of length $L_j'-L_j$ we call $C_j'$. Since for every $j\in \tilde{J}$ we have that $|C_j'|=L_j'-L_j>n/2t^2\geq |V_j|$, there is at least one vertex in $V(C_j')\cap (V_j'\backslash V_j)$. Call one such vertex $v_j$. Let $R$ be the set of all vertices $v_j$ for $j\in \tilde{J}$.

Now let $V_1'':= V_1'\cup \bigcup_{j\in \tilde{J}} V(C_j')$. Note that $|V_1''|=L_1$. Note also that {\rm (ii)} implies that $T[V_1'\cup R]$ is strongly $2$-connected; so certainly it is strongly $1$-connected. We now claim that $T[V_1'']$ is strongly $1$-connected. Indeed, suppose $x,y\in V_1''$, and we wish to find a path directed from $x$ to $y$ in $T[V_1'']$. First note that if $x\notin V_1'$ then $x\in V(C_j')$ for some $j\in \tilde{J}$, so there is a path $Q_j$ in $T[V(C_j')]$, possibly of length $0$, from $x$ to $v_j\in R$. Similarly note that if $y\notin V_1'$ then $y\in V(C_i')$ for some $i\in \tilde{J}$, so there is a path $Q_i'$ in $T[V(C_i')]$, possibly of length $0$, to $y$ from $v_i\in R$. Since $T[V_1'\cup R]$ is strongly $1$-connected there exists a path $P$ in $T[V_1'\cup R]$ directed from $v_j$ to $v_i$. So $Q_jPQ_i'$ is a walk in $T[V_1'']$ directed from $x$ to $y$. So indeed $T[V_1'']$ is strongly $1$-connected.

Note also that for every $j\in \{2,\dots, t\} \backslash \tilde{J}$ we have that $T[V_j']$ is strongly $2$-connected, so certainly strongly $1$-connected. So by Camion's theorem $T[V_1'']$ contains a Hamilton cycle, $C_1$ say, and for every $j\in \{2,\dots, t\} \backslash \tilde{J}$ we have that $T[V_j']$ contains a Hamilton cycle, $C_j$ say.

Now the cycles $C_1,\dots, C_t$ are vertex-disjoint and are of lengths $L_1,\dots, L_t$ respectively, so this completes the proof.
\endproof

\medskip

{\footnotesize \obeylines \parindent=0pt

Daniela K\"uhn, Deryk Osthus, Timothy Townsend
School of Mathematics
University of Birmingham
Edgbaston
Birmingham
B15 2TT
UK
}
\begin{flushleft}
{\it{E-mail addresses}:}
{\rm{\{d.kuhn, d.osthus, txt238\}@bham.ac.uk}}
\end{flushleft}

\end{document}